\numberwithin{equation}{section}
\theoremstyle{plain}
\newtheorem{Thm}{Theorem}[section]
\newtheorem{Prop}[Thm]{Proposition}
\newtheorem{Lem}[Thm]{Lemma}
\newtheorem{Cor}[Thm]{Corollary}
\theoremstyle{remark}
\newtheorem{Rem}[Thm]{Remark}
\theoremstyle{definition}
\newtheorem{Def}[Thm]{Definition}
\def\A{\mathbb A}
\def\G{\mathbb G}
\def\Z{\mathbb Z}
\def\Q{\mathbb Q}
\def\R{\mathbb R}
\def\C{\mathbb C}
\def\P{\mathbb P}
\def\sM{\mathcal M}
\def\sO{\mathcal O}
\def\Stab{{\rm Stab}}
\def\tors{{\rm tors}}
\def\PrePer{{\rm PrePer}}
\def\GL{{\rm GL}}
\def\Spec{{\rm Spec}}
\def\Zar{{\rm Zar}}
\def\Per{{\rm Per}}
\def\diag{{\rm diag}}
\def\Gal{{\rm Gal}}
\def\Pic{{\rm Pic}}
\def\adj{{\rm adj}}
\def\excp{{\rm excp}}
\def\la{\lambda}
\def\La{\Lambda}
\begin{document}
\title[]{Cyclotomic integral points for affine dynamics}

\author{Zhuchao Ji}
\address{Institute for Theoretical Sciences, Westlake University, Hangzhou 310030, China}
\email{jizhuchao@westlake.edu.cn}

\author{Junyi Xie}
\address{Beijing International Center for Mathematical Research, Peking University, Beijing 100871, China}
\email{xiejunyi@bicmr.pku.edu.cn}

\author{Geng-Rui Zhang}
\address{School of Mathematical Sciences, Peking University, Beijing 100871, China}
\email{grzhang@stu.pku.edu.cn, chibasei@163.com}

\subjclass[2020]{Primary 37P05; Secondary 37P15, 37P35}

\keywords{Regular endomorphisms, H\'enon maps, preperiodic points, maximal cyclotomic extension, cohomological hyperbolicity.}

\begin{abstract}
	Let $f:\A^N\to\A^N$ be a regular endomorphism of algebraic degree $d\geq2$ (i.e., $f$ extends to an endomorphism on $\P^N$ of algebraic degree $d$) defined over a number field. We prove that if the set of cyclotomic $f$-preperiodic points is Zariski-dense in $\A^N$, then some iterate $f^{\circ l}$ ($l\geq1$) is a quotient of a surjective algebraic group endomorphism $g:\G_m^N\to\G_m^N$, over $\overline{\Q}$. This result generalizes a theorem of Dvornicich and Zannier on cyclotomic preperiodic points of one-variable polynomials to higher dimensions. In fact, we prove a much more general rigidity result for dominant endomorphisms $f$ on an affine variety $X$ defined over a number field, concerning ``almost $f$-invariant'' Zariski-dense subsets of cyclotomic integral points. We apply our results to backward orbits of regular endomorphisms on $\A^N$ of algebraic degree $d\geq2$, and to periodic points of automorphisms of H\'enon type on $\A^N$.
\end{abstract}

\maketitle
\tableofcontents

\section{Introduction}
\subsection{Statement of the main results}
We establish rigidity results for algebraic dynamical systems of monomial type, defined as follows:
\begin{Def}
	Let $X$ be a quasi-projective variety of dimension $d$ and $f:X\dashrightarrow X$ a dominant rational self-map, both defined over a field $k$ of characteristic zero.
	\begin{enumerate}
		\renewcommand{\theenumi}{\roman{enumi}}
		\renewcommand{\labelenumi}{(\theenumi)}
		\item \label{monomialtype} We say that $(X,f)$ is of \emph{monomial type} if there exist integers $l\geq1$ and $n\geq d$, a group endomorphism $g:\G_{m,\overline{k}}^n\to\G_{m,\overline{k}}^n$ over $\overline{k}$, and a dominant morphism $\phi:\G_{m,\overline{k}}^n\to X_{\overline{k}}$ over $\overline{k}$ such that $f_{\overline{k}}^{\circ l}\circ\phi=\phi\circ g$, where $f_{\overline{k}}:X_{\overline{k}}\dashrightarrow X_{\overline{k}}$ is the base change of $f$ to $X_{\overline{k}}$;
		\item We say that $(X,f)$ is of \emph{strongly monomial type} if it is of monomial type and we can take $n=\dim(X)$ in the definition \eqref{monomialtype}.
	\end{enumerate}
\end{Def}
We fix an algebraic closure $\overline{\Q}$ of $\Q$ in $\C$. For an algebraic number $\alpha\in\overline{\Q}$, we define its \emph{house} to be $$C(\alpha)=\max\{|\sigma(\alpha)|\}_{\sigma:\Q(\alpha)\hookrightarrow\C}.$$
For a number field $K$, we define its \emph{maximal cyclotomic extension} as
$$K^c=K(U(\C)),$$
the subfield of $\C$ generated over $K$ by the group $U(\C)$ of all roots of unity in $\C$.

Our main result is the following theorem:
\begin{Thm}\label{thmmain}
Let $X\subseteq \A^N$ be a geometrically irreducible affine variety of dimension $d\geq1$ and $f:X\to X$ a dominant endomorphism, both defined over a number field $K$. Let $(x_1,\dots,x_N)$ denote the coordinates of $\A^N$. Assume that $P$ is a subset of $X(K^c)$ satisfying the following conditions:
\begin{itemize}
	\item (DCI, dense cyclotomic integral points) $P$ is Zariski-dense in $X$, and there exists an integer $M\geq1$ such that for every $y\in P$ and $1\leq i\leq N$, the multiple $M\cdot x_i(y)\in K^c$ is an algebraic integer;
	\item (BH, bounded house) there exists a constant $c\in\R_{>0}$ such that
	$$C(y):=\max\{C(y_i):1\leq i\leq N\}\leq c$$
	for every $y=(y_1,\dots,y_N)\in P$;
	\item (AI, almost invariant) $P\setminus f^{-1}(P)$ is not Zariski-dense in $X$.
\end{itemize}
Then $(X,f)$ is of monomial type (over $\overline{\Q}$).
\end{Thm}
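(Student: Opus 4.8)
The plan is to lift the set $P$ to a Zariski-dense set of torsion points on a torus dominating $X$, and then to transport the dynamics of $f$ by means of torsion-point rigidity in tori. Before that, two reductions. First, (DCI) and (BH) together force the points of $P$ to have uniformly bounded Weil height: for $y\in P$ and each $i$, the non-archimedean places contribute at most $\log M$ to $h(x_i(y))$, since $M x_i(y)\in\sO_{K^c}$ is an algebraic integer, while the archimedean places contribute at most $\log^+ C(y_i)\le\log^+ c$; hence $h(y)\le B$ for a constant $B=B(M,c)$. Second, (AI) propagates along iterates: if $Z\subsetneq X$ is a closed set with $P\setminus f^{-1}(P)\subseteq Z$, then for every $k\ge 0$ the set $P_k:=\bigcap_{j=0}^{k}f^{-j}(P)$ is again Zariski-dense, because inductively $P_k\setminus f^{-(k+1)}(P)\subseteq f^{-1}(Z)\subsetneq X$ by dominance of $f$, and each $P_k\subseteq P$ inherits (DCI) and (BH). Finally, ``monomial type'' being a statement over $\overline{\Q}$, we may base change to $\overline{K}$.

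The core step is the construction of the toric parametrization, and it is here that the arithmetic of $K^c$ is used in an essential way. By a theorem of Loxton on cyclotomic integers of bounded house, there is an integer $R=R(Mc)$ such that for every $y\in P$ and every $i$ one can write $M x_i(y)=\sum_{j=1}^{R}\varepsilon_{i,j}\,\zeta_{i,j}$ with $\varepsilon_{i,j}\in\{-1,0,1\}$ and $\zeta_{i,j}$ roots of unity. Only finitely many sign patterns $\tau=(\varepsilon_{i,j})$ can occur, so by irreducibility of $X$ some pattern $\tau_0$ occurs along a Zariski-dense subset $P^{*}\subseteq P$; for that pattern the displayed formulas define a morphism $\psi_{\tau_0}:\G_{m,\overline K}^{R_0}\to\A^N_{\overline K}$ whose image contains $P^{*}$, each point of $P^{*}$ being the $\psi_{\tau_0}$-image of a torsion point. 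Let $\Sigma$ be the set of torsion points $t\in\psi_{\tau_0}^{-1}(X)$ with $\psi_{\tau_0}(t)\in P^{*}$; then $\psi_{\tau_0}(\Sigma)=P^{*}$, and by Laurent's theorem (Mordell--Lang for $\G_m^{R_0}$) the Zariski closure of $\Sigma$ is a finite union of torsion translates of subtori. Since the images of these translates cover $P^{*}$ and $X$ is irreducible, one of them, say $\eta\cdot T$, dominates $X$ under $\psi_{\tau_0}$. Composing $\psi_{\tau_0}$ with translation by $\eta$ and an isomorphism $T\cong\G_{m,\overline K}^{n}$, where $n=\dim T\ge\dim X=d$, we obtain a dominant morphism $\phi:\G_{m,\overline K}^{n}\to X_{\overline K}$ together with a Zariski-dense subset $S\subseteq(\G_m^n)_{\tors}$ such that $\phi(S)\subseteq P$ is Zariski-dense in $X$.

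It remains to transport $f$ to the torus. By (AI) we may pass to a Zariski-dense $S'\subseteq S$ with $f(\phi(s))\in P$ for all $s\in S'$; applying Loxton and pigeonhole once more to $f(\phi(S'))$ produces a fixed pattern $\tau_1$ and a morphism $\psi_{\tau_1}:\G_{m,\overline K}^{R_1}\to\A^N_{\overline K}$ with $f(\phi(s))=\psi_{\tau_1}(s')$ for some torsion $s'$, for every $s\in S'$. The Zariski closure of the set of these torsion pairs $(s,s')$ is, again by Laurent's theorem, a finite union of torsion translates of subgroups of $\G_m^{n}\times\psi_{\tau_1}^{-1}(X)\subseteq\G_m^n\times\G_m^{R_1}$ lying inside the locus $\{(u,u'):f(\phi(u))=\psi_{\tau_1}(u')\}$; a component surjecting onto the first factor then exhibits $f\circ\phi$ as $\psi_{\tau_1}'\circ h$ for a group homomorphism $h$ and a torsion translate $\psi_{\tau_1}'$ of $\psi_{\tau_1}$. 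Iterating this construction with the new dominant toric parametrization in place of $\phi$, invoking minimality of the dimension of a torus dominating $X$, and passing to a suitable iterate $f^{\circ l}$ to absorb the accumulated torsion translations and to match the parametrization with itself, we obtain a dominant $\phi:\G_{m,\overline K}^{n}\to X_{\overline K}$ and a group endomorphism $g:\G_{m,\overline K}^{n}\to\G_{m,\overline K}^{n}$ with $f_{\overline K}^{\circ l}\circ\phi=\phi\circ g$. Hence $(X,f)$ is of monomial type.

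The decisive point is the middle step: converting the soft-looking hypotheses (DCI) and (BH) into the assertion that a Zariski-dense portion of $P$ consists of images of torsion points under a single toric map. There the uniformity of $M$ and $c$ over all of $P$ is indispensable, and the abelian structure of $K^c/K$ enters through Loxton's bound, which turns ``cyclotomic integer of bounded house'' into ``bounded sum of roots of unity''; Laurent's theorem then produces the parametrizing torus. The transport of the dynamics in the last step is more bookkeeping than novelty, but making the source and target of $g$ agree --- so that the same $n$ and the same $\phi$ occur on both sides of $f^{\circ l}\circ\phi=\phi\circ g$ --- genuinely requires the passage to an iterate together with a descending-chain argument among the toric dominations of $X$.
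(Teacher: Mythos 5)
Your overall strategy coincides with the paper's: use the Dvornicich--Zannier extension of Loxton's theorem plus pigeonhole on the finitely many ``patterns'' to lift a dense part of $P$ to torsion points of a torus, invoke Laurent's theorem to replace the lifted sets by torsion cosets, and then transport $f$ through the parametrization. (One small inaccuracy: over $K^c$ with $K\neq\Q$ the coefficients in Loxton's decomposition lie in a finite subset $E\subset K$, not in $\{-1,0,1\}$; this does not change the structure of the argument.) However, the final step contains a genuine gap. The Zariski closure of the set of torsion pairs $(s,s')$ is a torsion coset $\psi\subseteq\G_m^n\times\G_m^{R_1}$ surjecting onto the first factor, but such a coset is in general a multivalued correspondence, not a graph: its fibers over the first factor are cosets of the subgroup $V=\psi(1)\cdot y_1^{-1}\leqslant\G_m^{R_1}$, which has no reason to be trivial (e.g.\ $\psi=\G_m^n\times\G_m^{R_1}$ satisfies all your stated conditions). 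Your sentence ``a component surjecting onto the first factor then exhibits $f\circ\phi$ as $\psi_{\tau_1}'\circ h$ for a group homomorphism $h$'' therefore asserts exactly the hard point without proving it. To kill $V$ one must show that $V$ stabilizes the fibers of the second parametrization and then quotient by the common stabilizer of those fibers; the paper does this by first replacing $X$ by its normalization in the function field of the torus (so that general fibers of $\phi$ are irreducible), quotienting by $T=\bigcap_y\Stab(F_y)$, and then proving $V\leqslant\Stab(F_z)=T=1$ --- an argument that crucially uses surjectivity of the correspondence ($\pi_2(\psi)=Y$), a property you never arrange.

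A second, related gap is the closing clause ``iterating this construction\dots invoking minimality of the dimension of a torus dominating $X$\dots to match the parametrization with itself.'' Each application of Loxton--pigeonhole--Laurent to the image points produces a possibly different torus and a possibly different parametrizing map, and minimality of dimension does not force the source and target parametrizations to agree (two non-isomorphic dominant maps $\G_m^n\to X$ of the same minimal $n$ can perfectly well alternate). The paper resolves this by keeping \emph{all} finitely many types simultaneously, so that the correspondence acts on a finite set of irreducible torsion cosets $Y_\alpha$; one then picks a periodic index $\alpha_0$ for the induced self-map of that finite set and replaces $f$ by $f^{\circ n}$ to close the loop, and finally uses the Noetherian descending chain $\psi^{\circ k}(Y)$ to achieve the surjectivity needed above. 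Your height-boundedness remark and the propagation of (AI) along iterates are correct but not where the difficulty lies; as written, the proposal establishes the toric parametrization of $P$ but not the existence of the semiconjugating group endomorphism.
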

Note that we only need the condition (AI) to hold for the system $(X,f^{\circ l})$ (i.e., $P\setminus (f^{\circ l})^{-1}(P)$ is not Zariski-dense in $X$) for some integer $l\geq1$.

We recall the notions of dynamical degrees and cohomological hyperbolicity. Let $X$ be a quasi-projective variety of dimension $d$ over a field $k$ of characteristic zero, and let $f:X\dashrightarrow X$ be a dominant rational self-map of $X$. Fix a projective compactification $X^\prime$ of $X$ and view $f:X^\prime\dashrightarrow X^\prime$ as a dominant rational self-map of $X^\prime$. Let $L$ be a nef and big line bundle in $\Pic(X^\prime)$. Let $\Gamma$ be the graph of $f$ in $X^\prime\times X^\prime$ (i.e., the Zariski-closure of $\{(x,f(x)):x\text{ is a closed point in }X^\prime\setminus I(f)\}$), and let $\pi_j:\Gamma\to X^\prime$ be the $j$-th projection for $j=1,2$. For $0\leq i\leq d$, the \emph{$i$-th degree} of $f$ (relative to $L$) is
\begin{equation}\label{degi}
	\deg_{i,L}(f):=\left((\pi_2^* L)^i\cdot(\pi_1^* L)^{d-i}\right).
\end{equation}
The \emph{$i$-th dynamical degree} of $f$ is
$$\la_i(f):=\lim_{n\to\infty}\deg_{i,L}(f^{\circ n})^{1/n}\geq1.$$
The above limit exists and is independent of the choices of $X^\prime$ and $L$; see \cite{Dang2020,Truong2020}. As in \cite{recursive}, we define $\mu_i(f):=\la_i(f)/\la_{i-1}(f)$ for $1\leq i\leq d$ and $\mu_{d+1}(f):=0$, called the \emph{cohomological Lyapunov multipliers} of $f$. The log-concavity of $(\la_i(f))_{i=0}^d$ \cite[Theorem 1.1 (3)]{Truong2020} shows that $(\mu_i(f))_{i=1}^{d+1}$ is non-increasing in $i$. We say $f$ is \emph{cohomologically hyperbolic} if $\mu_i(f)\neq1$ for every $1\leq i\leq d+1$, equivalently, if there is a unique $0\leq i\leq d$ such that $\la_i(f)=\max\{\la_j(f):0\leq j\leq d\}$.

We show that for cohomologically hyperbolic systems, being of monomial type is equivalent to being of strongly monomial type.
\begin{Thm}\label{chdimequal}
	Let $X$ be a quasi-projective variety of dimension $d$ and $f:X\dashrightarrow X$ a dominant rational self-map, both defined over a field $k$ of characteristic zero. Assume that $f$ is cohomologically hyperbolic. Then $(X,f)$ is of monomial type if and only if $(X,f)$ is of strongly monomial type.
\end{Thm}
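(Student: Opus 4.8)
The implication ``strongly monomial type $\Rightarrow$ monomial type'' is immediate from the definition, since one is allowed to take $n=\dim(X)$ there; so the content is the converse, which I would prove as follows. Assume $(X,f)$ is of monomial type, and among all witnessing data $(l,n,g,\phi)$ --- with $g:\G_{m,\overline{k}}^n\to\G_{m,\overline{k}}^n$ a group endomorphism, $\phi:\G_{m,\overline{k}}^n\to X_{\overline{k}}$ a dominant morphism, and $f_{\overline{k}}^{\circ l}\circ\phi=\phi\circ g$ --- fix one with $n$ minimal; base changing, assume $k=\overline{k}$. The goal is $n=d$. First I would perform two routine normalizations. Passing to the stable image $g^{\circ m}(\G_m^n)$ (a coset $wS$ of a subtorus with $g(wS)=wS$, on which $\phi$ is still dominant because $\phi\circ g^{\circ m}=f^{\circ lm}\circ\phi$ is dominant) and translating $wS$ onto $S$ turns $g|_{wS}$ into a group endomorphism of $S$ composed with a translation, which after passing to a suitable iterate of $f$ and conjugating by one more translation becomes a genuine group endomorphism; by minimality of $n$ this forces $S=\G_m^n$, so I may assume $g$ is \emph{surjective} --- equivalently, the matrix $A\in M_n(\Z)$ with $g=g_A$ is invertible over $\Q$. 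Then, replacing $X$ by the Stein factorization of $\phi$ --- through which $g$ descends, since it permutes the fibres of $\phi$ hence their connected components, and over which $f^{\circ l}$ becomes a finite semiconjugate self-map with the same dynamical degrees, still cohomologically hyperbolic --- I may assume $\phi$ has geometrically connected fibres.

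Next I would let $H\subseteq\G_m^n$ be the largest subtorus through which $\phi$ factors, i.e.\ the identity component of $\{t:\phi(t\cdot x)=\phi(x)\text{ for generic }x\}$, and check that $H$ is $g$-invariant using the equivariance and the surjectivity of $g$: for $t\in H$ and a generic point written as $g(x)$, $\ \phi(g(t)\cdot g(x))=\phi(g(tx))=f^{\circ l}(\phi(tx))=f^{\circ l}(\phi(x))=\phi(g(x))$. Then $g$ and $\phi$ descend to $\G_m^n/H\cong\G_m^{n-\dim H}$, contradicting minimality of $n$ unless $\dim H=0$; so I may further assume $\phi$ factors through no nontrivial quotient torus. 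It follows that the generic fibre $F$ of $\phi$ --- geometrically irreducible, thanks to the Stein reduction --- of dimension $e:=n-d$, has trivial stabilizer in $\G_m^n$, since a positive-dimensional generic stabilizer would be contained in $H$. The goal is now to show $e=0$.

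For this last step, suppose $e\ge1$ and view $\phi$ as a $g$-equivariant fibration over $(X,f^{\circ l})$ of relative dimension $e$. The plan is to combine the Dinh--Nguyen comparison theorem for dynamical degrees under semiconjugacy --- which identifies the sorted sequence of cohomological Lyapunov multipliers of $g$ with the sorted concatenation of those of $f^{\circ l}$ and of the relative system $g\,|\,\phi$ --- with the explicit formula for the dynamical degrees of a monomial map, $\lambda_i(g_A)=|\mu_1\cdots\mu_i|$ where $\mu_1,\dots,\mu_n$ are the eigenvalues of $A$ ordered by decreasing absolute value (so the multipliers of $g$ are the $|\mu_i|$), and with cohomological hyperbolicity of $f^{\circ l}$ (none of its $d$ multipliers equals $1$). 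The generic fibre $F$ is then a positive-dimensional subvariety of the torus over $\overline{k}(X)$, with trivial stabilizer, carried by $g$ finite-\'etale onto the generic fibre over the image of the generic point and, on iterating, compatibly along the whole $f^{\circ l}$-orbit; a rigidity statement for such fibrewise-isogeny-stable subvarieties of tori --- in the spirit of Laurent's theorem on subvarieties of $\G_m^n$, using crucially that the cohomologically hyperbolic base map $f^{\circ l}$ (with $\lambda_1(f^{\circ l})>1$) is mixing enough to force the family $F$ to be isotrivial and hence genuinely isogeny-stable --- should then show $F$ is a torsion coset, hence a single point, contradicting $e\ge1$. I expect this final step to be the main obstacle: extracting the rigidity of the generic fibre from the dynamical-degree bookkeeping together with cohomological hyperbolicity, with the base-field twist by $f^{\circ l}$ (ruling out non-isotrivial fibrations, where Laurent's theorem does not directly apply) being the delicate point. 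The translation bookkeeping in the first two reductions --- group endomorphisms versus affine endomorphisms of $\G_m^n$ --- is routine but must be done with care.
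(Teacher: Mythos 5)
Your two preliminary reductions (surjectivity of $g$ after passing to the stable image, and triviality of the subtorus $H$ through which $\phi$ factors, so that the generic fibre has trivial stabilizer) are sound and match the spirit of the paper's Step~4-type reduction. But the proof has a genuine gap exactly where you flag it: the final step, showing the generic fibre of $\phi$ is a point, is not an argument but a hope (``a rigidity statement \dots should then show $F$ is a torsion coset''). The numerical route you propose cannot close on its own: the Dinh--Nguyen formula relates $\la_i(g)$ to the degrees of $f^{\circ l}$ and the \emph{relative} dynamical degrees of $g$ over $X$, and cohomological hyperbolicity of $f$ imposes no constraint whatsoever on those relative degrees (a positive-dimensional fibre could carry relative multipliers all $\neq 1$ without contradicting anything). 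The obstruction is geometric, not numerical.

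The paper closes this gap in two moves that are absent from your proposal. First, it decomposes $A$ (up to isogeny) into a quasi-unipotent block $A_1$ and a positive block $A_2$ (no eigenvalue $0$ or a root of unity), and uses Lemma~\ref{chdeggrow} --- degree growth of curves under a cohomologically hyperbolic map --- to show that $\phi$ must contract each slice $\G_m^{n_1}\times\{z\}$ to a point: otherwise a curve in the image would have degrees growing at rate $\geq\beta_2^r$ downstairs but at most $\beta_1^r<\beta_2^r$ upstairs (all dynamical degrees of $\varphi_{A_1}$ being $1$), contradicting the projection formula. This is where cohomological hyperbolicity is actually spent, and it is needed: your minimality-of-$n$ and $H$-quotient reductions do not rule out a quasi-unipotent factor that $\phi$ fails to contract. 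Second, with $g$ now positive, the paper takes the irreducible component $Z$ of $\G_m^n\times_X\G_m^n$ containing the diagonal; $Z$ is $(g,g)$-invariant, so by Pink--Roessler periodic torsion points are dense in $Z$, and Laurent's theorem forces $Z$ to be a subtorus. Hence the generic fibre $F_y$ is a coset, so it equals a translate of its stabilizer $T=1$ and is a point, giving $n=d$. If you want to complete your proof, you should replace the relative-degree bookkeeping by these two ingredients (or equivalents): the curve-growth lemma to kill the root-of-unity part of $A$, and the torsion-density argument on the fibre product to force the fibres to be cosets.
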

For cohomologically hyperbolic endomorphisms on affine varieties over $\overline{\Q}$, by Theorem \ref{chdimequal} we can deduce that being of monomial type is equivalent to the conditions (DCI), (BH), and (AI) ``up to iterates and changing the base number field'':
\begin{Cor}\label{chequiv}
	Let $X$ be an irreducible affine variety and $f:X\to X$ a dominant endomorphism, both over $\overline{\Q}$. Assume that $f$ is cohomologically hyperbolic. Then the following statements are equivalent:
	\begin{enumerate}
		\item \label{chequiv1} $(X,f)$ is of monomial type;
		\item \label{chequiv2} $(X,f)$ is of strongly monomial type;
		\item \label{chequiv3} there exist a number field $K$ such that $(X,f)$ is defined over $K$, an integer $l\geq1$, and a subset $P\subseteq X(K^c)$ that satisfies (DCI), (BH), and (AI) for the system $(X,f^{\circ l})$.
	\end{enumerate}
\end{Cor}
The proofs of Theorem \ref{thmmain}, Theorem \ref{chdimequal}, and Corollary \ref{chequiv} will be given in \S \ref{seccycmainpf}.

\subsection{Applications}
We present some examples of endomorphisms on $\A^N$ to which our main theorem applies.

Let $N\in\Z_{>0}$ and $f:\A^N\to\A^N$ be a polynomial endomorphism over $\C$. We view $\A^N=\P^N\setminus\{z_0=0\}\subset\P^N$, where $[z_0,z_1,\dots,z_N]$ are the coordinates of $\P^N$. Then $f$ extends to a rational self-map $f:\P^N\dashrightarrow\P^N$. Let $I(f)\subset\P^N$ denote the indeterminacy locus of $f$ in $\P^N$. The algebraic degree of $f$ is $\deg_1(f):=\deg_{1,\sO_{\P^N}(1)}(f)$; see \eqref{degi}. If we write $f=(f_1,\dots,f_N):\A^N\to\A^N$ with each $f_i\in\C[z_1,\dots,z_N]$, then $\deg_1(f)=\max\{\deg(f_i):1\leq i\leq N\}$.

\subsubsection*{Preperiodic points of regular endomorphisms on affine spaces}
Our main theorems can be applied to regular endomorphisms on $\A^N$ of algebraic degree $d\geq2$.

For $N,d\in\Z_{>0}$, a polynomial endomorphism $f:\A^N\to\A^N$ over a field $k$ is called a regular endomorphism of algebraic degree $d$ if it extends to an endomorphism $f:\P^N\to\P^N$ of algebraic degree $d$. Write $f=(f_1,\dots,f_N)$ with each $f_j\in k[z_1,\dots,z_N]$. Then $f:\A^N\to\A^N$ is a regular endomorphism of algebraic degree $d$ if and only if each $f_j$ has degree $d$ and $f_h^{-1}(0)=\{0\}$ in $\A^N(\overline{k})=\overline{k}^N$, where $f_j^+$ is the sum of monomials of degree $d$ of $f_j$ ($1\leq j\leq N$) and $f_h=(f_1^+,\dots,f_N^+):\A^N\to\A^N$ is the homogeneous part of $f$. If $N=1$, then all (non-constant) polynomial endomorphisms on $\A^1$ are regular endomorphisms.
\begin{Thm}\label{appaff}
	Let $N\in\Z_{>0}$ and $f:\A^N\to\A^N$ be a regular endomorphism of algebraic degree $d\geq2$ defined over a number field $K$. Let $P:=\PrePer(f,\A^N(K^c))$ be the set of $K^c$-rational $f$-preperiodic points in $\A^N$. If $P$ is Zariski-dense in $\A^N$, then $(\A^N,f)$ is of strongly monomial type.
\end{Thm}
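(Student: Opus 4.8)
The plan is to derive Theorem \ref{appaff} from Theorem \ref{thmmain} and Theorem \ref{chdimequal}: the substance is only to check that $P=\PrePer(f,\A^N(K^c))$ satisfies the hypotheses DCI, BH, AI and that $f$ is cohomologically hyperbolic, after which the two theorems do the work.

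I would first dispose of the easy points. Since $f$ is defined over $K\subseteq K^c$, for $y\in P$ the image $f(y)$ lies in $\A^N(K^c)$ and is again $f$-preperiodic, so $f(y)\in P$; hence $P\subseteq f^{-1}(P)$, $P\setminus f^{-1}(P)=\varnothing$, and AI holds trivially. The Zariski-density of $P$ is the standing hypothesis, which is the first half of DCI. For cohomological hyperbolicity I would compute dynamical degrees using the extension $f:\P^N\to\P^N$, a morphism of degree $d$: its graph is $\P^N$, with $\pi_1^*\sO_{\P^N}(1)$ the hyperplane class and $\pi_2^*\sO_{\P^N}(1)=f^*\sO_{\P^N}(1)=\sO_{\P^N}(d)$, whence $\deg_{i,\sO_{\P^N}(1)}(f^{\circ n})=d^{ni}$ and $\la_i(f)=d^i$; thus $\mu_i(f)=d\neq1$ for $1\leq i\leq N$ and $\mu_{N+1}(f)=0$, so $f$ is cohomologically hyperbolic and, by Theorem \ref{chdimequal}, it is enough to show that $(\A^N,f)$ is of monomial type.

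It remains to verify BH and the $M$-integrality in DCI, which I would do place by place with filled Julia sets. Every $y\in P$ has finite forward orbit under $f$, hence forward orbit bounded in $\C_v^N$ at every absolute value $v$; the same holds for every conjugate of $y$ (extending embeddings of $\Q(y_i)$ to $\overline{\Q}$), and $f$, being defined over the fixed number field $K$, has only finitely many conjugates, all still regular of algebraic degree $d$. So it suffices to bound, at each place, the filled Julia sets $\{z\in\C_v^N:\sup_n\|f^{\circ n}(z)\|_v<\infty\}$ of $f$ and its conjugates. The regularity hypothesis $f_h^{-1}(0)=\{0\}$ gives at each $v$ a lower bound $\|f(z)\|_v\geq c_v\|z\|_v^d$ once $\|z\|_v$ exceeds a threshold $r_v$, which forces the $v$-adic filled Julia set to be bounded; moreover for all but finitely many $v$, where $f$ has good reduction, one may take $c_v=r_v=1$, so that the filled Julia set lies in the closed unit polydisc. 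Taking suprema over the finitely many archimedean places and conjugates yields the uniform house bound $c$ of BH; at the finite set $S$ of non-archimedean bad places one gets uniform bounds $\rho_v$ on $\|\cdot\|_v$, while for $v\notin S$ the bound is $1$, so a single integer $M$ divisible by sufficiently high powers of the rational primes lying under $S$ gives $M\cdot x_i(y)\in\sO_{K^c}$ for all $y\in P$ and all $i$. Thus $P$ satisfies DCI, BH, AI; Theorem \ref{thmmain} gives that $(\A^N,f)$ is of monomial type, and cohomological hyperbolicity together with Theorem \ref{chdimequal} upgrades this to strongly monomial type, as required. I expect the only genuinely non-routine point to be the boundedness of the $v$-adic filled Julia sets, i.e. the uniform lower bound $\|f(z)\|_v\geq c_v\|z\|_v^d$ coming from $f_h^{-1}(0)=\{0\}$ together with the good-reduction statement at almost all places; everything else reduces to the two main theorems and standard height and reduction bookkeeping.
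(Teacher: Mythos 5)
Your proposal is correct and follows essentially the same route as the paper: reduce to Theorems \ref{thmmain} and \ref{chdimequal} by checking (AI) trivially, computing $\la_i(f)=d^i$ for cohomological hyperbolicity, and bounding preperiodic points place by place via the escape-rate estimate $\left|f(z)\right|_v\gg_v\left|z\right|_v^{d}$ coming from $f_h^{-1}(0)=\{0\}$ (the paper phrases the archimedean case via the Green function of Bedford--Jonsson and the non-archimedean case via a Nullstellensatz identity $z_i^m=\sum_j R_{ij}f_j^+$, which is exactly the point you flag as the only non-routine step). The one detail worth making explicit if you write this up is that at non-archimedean places the lower bound cannot be obtained by compactness of the unit sphere and really does require the Nullstellensatz relation, whose coefficients are $v$-integral for all but finitely many $v$ — this is what yields the uniform integer $M$.
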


\subsubsection*{Backward orbits of regular endomorphisms on affine spaces}
We show that for cyclotomic points in a backward orbit of a regular endomorphism $f$ on $\A^N$ of algebraic degree $d\geq2$, the conditions (DCI), (BH), and (AI) have a simple equivalent form. Note that the backward orbit
$$\{z\in \A^N(\overline{\Q}):\exists n\geq1, f^{\circ n }(z)=x\}$$
of some point $x\in\A^N(\overline{\Q})$ may not be Zariski-dense in $\A^N$, while the set of $\overline{\Q}$-rational $f$-preperiodic points is always Zariski-dense in $\A^N$ because $f$ is polarized \cite{Fak03}. For $x$ in a non-empty Zariski open subset of $\A^N$, the backward orbit of $x$ is Zariski-dense \cite[Theorem 1.47]{DS10}.
\begin{Thm}\label{back}
	Let $N\in\Z_{>0}$ and $f:\A^N\to\A^N$ be a regular endomorphism of algebraic degree $d\geq2$ defined over a number field $K$. Let $x\in \A^N(\overline{K})$ and
	$$P=\{z\in \A^N(K^c):\exists n\geq1, f^{\circ n }(z)=x\}$$
	be the set of $K^c$-rational points in the backward orbit of $x$ under $f$. Then $P$ satisfies the conditions (DCI), (BH), and (AI) if and only if $P$ is Zariski-dense in $\A^N$. In this case, $f$ is of strongly monomial type.
\end{Thm}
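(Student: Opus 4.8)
The plan is to show that Zariski-density of $P$ already forces the hypotheses (BH) and (AI) of Theorem~\ref{thmmain}, apply that theorem, and then use cohomological hyperbolicity to pass from monomial type to strongly monomial type. The implication $(\mathrm{DCI})+(\mathrm{BH})+(\mathrm{AI})\Rightarrow$ ``$P$ Zariski-dense'' is trivial, since Zariski-density is part of (DCI); so we may assume $P$ is Zariski-dense and must verify the three conditions.

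Condition (AI) is formal: given $y\in P$, let $n\ge1$ be least with $f^{\circ n}(y)=x$; if $n\ge2$ then $f^{\circ(n-1)}(f(y))=x$, so $f(y)\in P$, i.e.\ $y\in f^{-1}(P)$. Hence $P\setminus f^{-1}(P)\subseteq f^{-1}(x)$, a finite set (as $f:\A^N\to\A^N$ is a finite morphism, of topological degree $d^N$), which is not Zariski-dense in $\A^N$ because $N\ge1$.

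The substance is (DCI) and (BH). Here I would use that $f$ extends to a morphism $f:\P^N\to\P^N$ of degree $d$ for which the hyperplane at infinity $H_\infty=\{z_0=0\}$ is totally invariant ($f^*H_\infty=d\,H_\infty$), together with the elementary estimate coming from $f_h^{-1}(0)=\{0\}$: after enlarging $K$ so that $x\in\A^N(K)$ and fixing a number field $L\supseteq K(x)$, for each place $v$ of $L$ there are constants $c_v,R_v>0$, both equal to $1$ for all but finitely many $v$, with $\|f(w)\|_v\ge c_v\|w\|_v^{\,d}$ whenever $\|w\|_v\ge R_v$ (equivalently the comparison $\hat{\la}_{f,v}=\log^+\|\cdot\|_v+O(1)$ for the local canonical height of the polarized endomorphism $f$, which is exact at places of good reduction). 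It follows that if a point $w$ in the backward orbit of $x$ had $\|w\|_v$ above a suitable threshold $B_v$ depending only on $v,f,x$, then $\|f^{\circ k}(w)\|_v$ would be strictly increasing in $k$ and eventually exceed $\|x\|_v$, contradicting $f^{\circ n}(w)=x$; hence $\|w\|_v\le B_v$ uniformly over the backward orbit, and---using total invariance of $H_\infty$, or merely discreteness of the non-archimedean valuations---one may take $B_v=1$ for all but finitely many non-archimedean $v$. Absorbing the finitely many exceptional non-archimedean places into a single integer $M$ gives (DCI). For (BH), I would observe that any $\overline{\Q}/\Q$-conjugate $\sigma(y)$ of a point $y\in P$ satisfies $(f^{\sigma})^{\circ n}(\sigma(y))=\sigma(x)$, where the conjugated map $f^{\sigma}$---which depends only on $\sigma|_K$---is again a regular endomorphism of $\A^N$ of degree $d$; since $\sigma|_K$ ranges over the finitely many embeddings $K\hookrightarrow\overline{\Q}$, applying the archimedean form of the bound above to each of the finitely many pairs $(f^{\sigma|_K},\sigma(x))$ produces a single constant $c$ with $|\sigma(y_i)|\le c$ for all $i$ and all $\sigma$, i.e.\ $C(y)\le c$ for every $y\in P$.

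Granting (DCI), (BH), (AI), Theorem~\ref{thmmain} shows that $(\A^N,f)$ is of monomial type. To upgrade this to strongly monomial type one argues as for Theorem~\ref{appaff}: the morphism $f:\P^N\to\P^N$ has algebraic degree $d$, so $\la_i(f)=d^i$ for $0\le i\le N$, whence $\mu_i(f)=d\ne1$ for $1\le i\le N$ and $\mu_{N+1}(f)=0\ne1$; thus $f$ is cohomologically hyperbolic, and Theorem~\ref{chdimequal} converts monomial type into strongly monomial type. The main obstacle is the uniform, place-by-place control behind (DCI) and (BH): one must bound all non-archimedean denominators of points of $P$ by a single $M$ and all archimedean absolute values of all their coordinates---over the a priori unbounded-degree fields $\Q(y)$---by a single constant $c$, which is precisely what forces the reduction to the finitely many Galois conjugates of the pair $(f,x)$.
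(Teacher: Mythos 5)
Your proposal is correct and follows essentially the same route as the paper: (AI) from $P\setminus f^{-1}(P)\subseteq f^{-1}(x)$, (BH) and (DCI) from the escape-rate estimates for the polarized map $f$ (archimedean Green function bounded by $G(x)$ for the finitely many Galois conjugates of $(f,x)$, and the non-archimedean threshold $A_v(x)=\max\{A_v,|x|_v\}$ equal to $1$ at all but finitely many places), then Theorems \ref{thmmain} and \ref{chdimequal} via $\la_i(f)=d^i$. Your inclusion $P\setminus f^{-1}(P)\subseteq f^{-1}(x)$ is in fact the careful version of the paper's $\subseteq\{x\}$; both sets are finite, so nothing changes.
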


\subsubsection*{Periodic points of automorphisms of H\'enon type on $\A^N$}
We can apply our results to periodic points of automorphisms of H\'enon type on $\A^N$, as defined below:
\begin{Def}
	For $N\in\Z_{\geq2}$ and a polynomial automorphism $f:\A^N\to\A^N$ defined over $\C$, we say that $f$ is of H\'enon type if $\deg_1(f)\geq2$ and $I(f)\cap I(f^{-1})=\emptyset$.
\end{Def}
We require $N\geq2$ because every automorphism $f:\A^1\to\A^1$ has (algebraic) degree one. Note that an automorphism $f:\A^N\to\A^N$ of H\'enon type is never a regular endomorphism. 

\begin{Thm}\label{Henon} Let $N\in\Z_{\geq2}$ and $f:\A^N\to\A^N$ be a polynomial automorphism of H\'enon type defined over a number field $K$. Let $P=\Per(f,\A^N(K^c))$ be the set of $K^c$-rational $f$-periodic points in $\A^N$. Then $P$ is not Zariski-dense in $\A^N$.
\end{Thm}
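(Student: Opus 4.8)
The plan is to argue by contradiction. Assume $P=\Per(f,\A^N(K^c))$ is Zariski-dense in $\A^N$; I will check that $P$ satisfies the hypotheses (DCI), (BH), (AI) of Theorem~\ref{thmmain}, deduce that $(\A^N,f)$ is of monomial type, and then contradict this using the dynamical degrees of an automorphism of H\'enon type.

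Condition (AI) is immediate: since $f$ is defined over $K\subseteq K^c$ it preserves $\A^N(K^c)$, and it preserves periodicity, so $P\subseteq f^{-1}(P)$ and $P\setminus f^{-1}(P)=\emptyset$. For (BH), for any field embedding $\tau\colon\overline{\Q}\hookrightarrow\C$ and any $y\in P$, the point $\tau(y)$ is a periodic point of the conjugate automorphism $f^{\tau}$, which depends only on $\tau|_K$ (one of finitely many embeddings of $K$) and is again of H\'enon type; since there are only finitely many such $f^{\tau}$ and the set of points of $\A^N(\C)$ with bounded $f^{\tau}$-orbit is compact, the quantity $\max_{\tau}\max_i|\tau(x_i(y))|$, which dominates $C(y)$, is bounded by a constant independent of $y$. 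For (DCI), fix a finite set $S$ of places of $K$ containing the archimedean places and those at which $f$ or $f^{-1}$ is not integral. For a non-archimedean place $v$ of $\overline{\Q}$ lying over a place of $K$ outside $S$, the disjointness $I(f)\cap I(f^{-1})=\emptyset$ yields an escape-at-infinity estimate: a point with a $v$-adically large coordinate is sent, forward or backward, to a point with a strictly larger coordinate, so periodic points are $v$-integral. At the finitely many $v$ over $S$ the same estimate bounds the $v$-adic size of any periodic point by a constant depending only on $v$ and $f$ — crucially not on the period — and this yields a single integer $M\ge1$ with $M\cdot x_i(y)\in\sO_{\overline{\Q}}$, hence $M\cdot x_i(y)\in\sO_{K^c}$, for all $y\in P$ and all $i$.

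By Theorem~\ref{thmmain}, $(\A^N,f)$ is of monomial type. Next, $f$ is cohomologically hyperbolic: $\lambda_0(f)=\lambda_N(f)=1$ since $f$ is an automorphism; $\deg_1(f)\ge2$ together with $I(f)\cap I(f^{-1})=\emptyset$ forces $\deg_1(f^{\circ n})=\deg_1(f)^n$ for all $n$, so $\lambda_1(f)=\deg_1(f)\ge2$, and since $\deg_1(f^{-1})=1$ would force $f$ affine, also $\lambda_{N-1}(f)=\lambda_1(f^{-1})=\deg_1(f^{-1})\ge2$; finally the known computation of the dynamical degrees of an automorphism of H\'enon type shows $\lambda_i(f)$ attains its maximum at a single index. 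Theorem~\ref{chdimequal} then gives that $(\A^N,f)$ is of strongly monomial type: there are $l\ge1$, a surjective group endomorphism $g\colon\G_{m,\overline{\Q}}^N\to\G_{m,\overline{\Q}}^N$ (given by a matrix $A\in M_N(\Z)$ with $\det A\ne0$), and a dominant, hence generically finite, morphism $\phi\colon\G_{m,\overline{\Q}}^N\to\A^N_{\overline{\Q}}$ with $f^{\circ l}\circ\phi=\phi\circ g$.

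Because $\phi$ is dominant and generically finite it preserves all dynamical degrees, so $\lambda_i(g)=\lambda_i(f^{\circ l})=\lambda_i(f)^l$ for $0\le i\le N$. For a monomial self-map, $\lambda_i(g)$ equals the product of the $i$ largest moduli of the eigenvalues of $A$; hence $|\det A|=\lambda_N(g)=\lambda_N(f)^l=1$, so $A\in\GL_N(\Z)$, and the spectral radius of $A$ equals $\lambda_1(g)=\lambda_1(f)^l=\deg_1(f)^l$, a rational integer $\ge2$. This is impossible: if $\alpha$ is an eigenvalue of $A$ with $|\alpha|=k:=\deg_1(f)^l\ge2$, then $\alpha$ and $\alpha^{-1}$ are algebraic integers (eigenvalues of $A$ and of $A^{-1}\in\GL_N(\Z)$), so $\alpha$ is an algebraic unit; $\alpha$ cannot be real (a real eigenvalue of modulus $k$ equals $\pm k$, which is not a unit), so $\alpha\bar\alpha=k^2$ and $\alpha,\bar\alpha$ are the two roots of $t^2-(\alpha+\bar\alpha)t+k^2$ over $E:=\Q(\alpha+\bar\alpha)$, whence $N_{E(\alpha)/E}(\alpha)=k^2$ would be a unit of $\sO_E$ — impossible since $\bigl|N_{E/\Q}(k^2)\bigr|=k^{2[E:\Q]}>1$. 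This contradiction shows $P$ is not Zariski-dense in $\A^N$. The only delicate points are the uniformity over all periods of the local estimates behind (BH) and (DCI) — amounting to placing the periodic points of $f$ and of its conjugates inside an adelic version of the compact filled Julia set — and, for $N\ge3$, citing the dynamical-degree computation that yields cohomological hyperbolicity; the closing $\GL_N(\Z)$ argument is elementary.
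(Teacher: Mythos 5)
Your proposal is correct and follows essentially the same route as the paper: verify (AI), (BH), (DCI), invoke Theorems \ref{thmmain} and \ref{chdimequal} to get a semiconjugacy $f^{\circ l}\circ\phi=\phi\circ\varphi_A$ with $A\in\GL_N(\Z)$, and derive a contradiction from the top eigenvalue of $A$ being an algebraic unit of modulus $\deg_1(f)^l\geq2$. The one place you gesture rather than prove — the uniform local escape estimates at all places giving integrality and boundedness of periodic points — is exactly what the paper supplies via Kawaguchi's local Green functions $G_{f,v},G_{f^{-1},v}$ and the covering $V_v^+\cup V_v^-=\A^N(\overline{K_v})$, with trivial constants at almost all places, so your outline is sound.
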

Assume that $N=2$. Let $f:\A^2\to\A^2$ be a polynomial automorphism defined over $\overline{\Q}$ such that $\la_1(f)>1$ (or equivalently, $f$ has positive entropy). By \cite{FM89}, after conjugation over $\overline{\Q}$, $f$ is of the form $f=f_1\circ\cdots\circ f_m$, where $m\in\Z_{>0}$ and for each $1\leq i\leq m$, $f_i(x,y)=(p_i(x)-a_i y,b_ix)$ with $a_i,b_i\in \overline{\Q}^*$ and $p_i(x)\in\overline{\Q}[x]$ of degree $\geq2$. It is clear that $f=f_1\circ\cdots\circ f_m$ is of H\'enon type. If $m=1$ and $b_1=1$, then $f(x,y)=(p_1(x)-a_1y,x)$ is called a H\'enon map. We immediately deduce the following corollary from Theorem \ref{Henon}:
\begin{Cor}\label{HenonA2} Let $f:\A^2\to\A^2$ be a polynomial automorphism with $\la_1(f)>1$ defined over a number field $K$. Then $\Per(f,\A^2(K^c))$ is not Zariski-dense in $\A^2$.
\end{Cor}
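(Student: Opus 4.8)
The plan is to deduce Corollary \ref{HenonA2} directly from Theorem \ref{Henon} combined with the Friedland--Milnor normal form recalled just above, the only genuine issue being that the conjugation putting $f$ into that normal form is a priori defined only over $\overline{\Q}$, not over $K$. I would argue by contradiction: suppose $\Per(f,\A^2(K^c))$ is Zariski-dense in $\A^2$. Since $\la_1(f)>1$, by \cite{FM89} there is a polynomial automorphism $\phi:\A^2\to\A^2$ defined over $\overline{\Q}$ such that $g:=\phi\circ f\circ\phi^{-1}$ has the shape $f_1\circ\cdots\circ f_m$ with $f_i(x,y)=(p_i(x)-a_iy,b_ix)$, $a_i,b_i\in\overline{\Q}^*$, $\deg p_i\geq2$; as explained above, such a $g$ is a polynomial automorphism of H\'enon type. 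Choose a number field $K'$ with $K\subseteq K'\subseteq\overline{\Q}$, finite over $K$, such that $\phi$, $\phi^{-1}$, and $g$ are all defined over $K'$, so that $g$ is an automorphism of H\'enon type defined over the number field $K'$ and Theorem \ref{Henon} applies to $g$ over $K'$.

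The remaining step is to transport the density hypothesis from $f$ to $g$. Since $K\subseteq K'$ we have $K^c\subseteq K'^c$, hence $\Per(f,\A^2(K'^c))\supseteq\Per(f,\A^2(K^c))$ is still Zariski-dense in $\A^2$. Because $\phi$ and $\phi^{-1}$ are defined over $K'\subseteq K'^c$, the map $\phi$ restricts to a bijection of $\A^2(K'^c)$ onto itself, and from $g^{\circ n}=\phi\circ f^{\circ n}\circ\phi^{-1}$ for all $n\geq1$ this bijection carries $\Per(f,\A^2(K'^c))$ onto $\Per(g,\A^2(K'^c))$. Being an isomorphism of varieties, $\phi$ preserves Zariski-density, so $\Per(g,\A^2(K'^c))$ is Zariski-dense in $\A^2$, contradicting Theorem \ref{Henon} applied to $g$ over $K'$. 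Hence $\Per(f,\A^2(K^c))$ cannot be Zariski-dense.

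I do not expect a real obstacle here: the entire mathematical content is already contained in Theorem \ref{Henon}. The only point requiring care is the field bookkeeping just described, namely that passing from $K$ to a finite extension $K'$ \emph{enlarges} (rather than shrinks) the set of cyclotomic periodic points, so Zariski-density is inherited, and that conjugation by a $K'$-rational automorphism acts bijectively on $\A^2(K'^c)$ while preserving periodicity and Zariski-density.
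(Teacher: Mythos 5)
Your proposal is correct and follows exactly the route the paper intends: apply the Friedland--Milnor normal form to conjugate $f$ over $\overline{\Q}$ to an automorphism of H\'enon type and invoke Theorem \ref{Henon}; the paper simply calls this an immediate deduction. The field bookkeeping you supply (enlarging $K$ to $K'$ so the conjugation is $K'$-rational, noting $K^c\subseteq K'^c$, and transporting Zariski-density through the conjugation) is the correct way to fill in the details the paper leaves implicit.
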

The philosophy behind these applications lies in the spirit of unlikely intersection problems \cite{UnlikelyBook}. For Theorem \ref{appaff} and Theorem \ref{Henon}, the set of cyclotomic $f$-preperiodic points (``special points'') should not be Zariski-dense in the underlying variety $X$, unless $X$ is a ``special variety'', i.e., $(X,f)$ is of strongly monomial type. For Theorem \ref{back}, we view cyclotomic points in a given backward orbit as ``special points''.

\medskip

The proofs of these applications will be given in \S \ref{SApp}.

\subsection{Motivation and previous results}
In 2007, Dvornicich and Zannier proved the following rigidity result \cite[Theorem 2]{DZ07} for one-variable polynomials with infinitely many cyclotomic preperiodic points:
\begin{Thm}[Dvornicich-Zannier]\label{thmDZ}
Let $f\in K[z]$ be a one-variable polynomial of degree $d\geq2$ over a number field $K$. Let $K^c$ be the maximal cyclotomic extension of $K$. Let $P:=\PrePer(f,K^c)$ be the set of $f$-preperiodic points in $K^c$. Assume that $P$ is an infinite set. Then there exists a polynomial $L\in\overline{\Q}[z]$ of degree $1$ such that $(L\circ f\circ L^{-1})(z)$ is either $z^d$ or $\pm T_d(z)$, where $T_d(z)$ is the Chebyshev polynomial of degree $d$.
\end{Thm}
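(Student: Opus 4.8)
The plan is to derive Theorem~\ref{thmDZ} from Theorem~\ref{appaff} in the case $N=1$, together with a classification of the one-variable polynomials of strongly monomial type. The Zariski-closed subsets of $\A^1$ are the finite sets and $\A^1$ itself, so ``$P$ infinite'' is the same as ``$P$ Zariski-dense in $\A^1$''; since every non-constant $f\in K[z]$ is a regular endomorphism of $\A^1$, Theorem~\ref{appaff} gives that $(\A^1,f)$ is of strongly monomial type. Unwinding the definition, there are $l\geq1$, a group endomorphism $g\colon\G_{m,\overline{\Q}}\to\G_{m,\overline{\Q}}$, necessarily of the form $g(t)=t^m$ with $m\in\Z$, and a non-constant Laurent polynomial $\phi$ (i.e.\ a dominant morphism $\G_{m,\overline{\Q}}\to\A^1_{\overline{\Q}}$) with $f^{\circ l}\circ\phi=\phi\circ g$. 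Comparing degrees on $\P^1$ and using $\deg f=d\geq2$ forces $|m|=d^l\geq 2$, and replacing $(l,g)$ by $(2l,g^{\circ 2})$ if $m<0$ we may assume $m=d^l$. Set $F:=f^{\circ l}$, a polynomial of degree $d^l$ over $\overline{\Q}$ with $F\circ\phi=\phi\circ g$. It then suffices to prove the claim $(\star)$: every such $F$ is conjugate, by a degree-one polynomial over $\overline{\Q}$, to $z^{d^l}$ or to $\pm T_{d^l}$.

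Granting $(\star)$, we descend from $F=f^{\circ l}$ to $f$: since $f$ commutes with $F$, if some degree-one $L$ conjugates $F$ to $z^{d^l}$ then $LfL^{-1}$ is a degree-$d$ polynomial commuting with $z^{d^l}$, hence equals $\zeta z^d$ for a root of unity $\zeta$ (by the classical description of the centralizer of a power map, or by a direct coefficient argument), and one more conjugation by a scaling turns it into $z^d$; and if $L$ conjugates $F$ to $\pm T_{d^l}$ then $LfL^{-1}$ commutes with $\pm T_{d^l}$, so $LfL^{-1}=\pm T_d$ by Ritt's theorem on commuting polynomials. Either way $f$ is conjugate over $\overline{\Q}$ to $z^d$ or $\pm T_d$ by a degree-one polynomial, as desired.

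For $(\star)$, extend $\phi,g,F$ to $\P^1$. Since $\phi$ has poles only on $\{0,\infty\}=\P^1\setminus\G_m$ and is surjective, $\phi^{-1}(\infty)$ is a non-empty subset of $\{0,\infty\}$. If $\phi^{-1}(\infty)=\{0\}$ we may pass to $\phi(1/t)$, which satisfies the same semiconjugacy with $t\mapsto t^m$; so we are reduced either to (a) $\phi$ an ordinary polynomial in $t$, or to (b) $\phi$ a genuine Laurent polynomial with a pole of order $a\geq1$ at $0$ and $b\geq1$ at $\infty$. In case (a), $a_0:=\phi(0)$ is a finite fixed point of $F$, and matching the Taylor expansions at $t=0$ of $F(\phi(t))=\phi(t^m)$ — the right-hand side vanishes to order $\geq m$ at $t=0$ after subtracting $a_0$, as $m\geq 2$ — shows inductively that $F(z)-a_0$ has no monomials of degree $1,\dots,m-1$ in $(z-a_0)$, whence $F(z)=a_0+b_m(z-a_0)^m$ is conjugate to $z^{d^l}$. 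In case (b) one first shows $F$ is post-critically finite: iterating gives $F^{\circ n}\circ\phi=\phi\circ(t\mapsto t^{m^n})$, the map $t\mapsto t^{m^n}$ is ramified only over $\{0,\infty\}$, so the critical values of $F^{\circ n}\circ\phi$ — hence, by surjectivity of $\phi$, of $F^{\circ n}$ — lie in the fixed finite set $\phi(\{0,\infty\})\cup\phi(\mathrm{Crit}(\phi))$, and letting $n\to\infty$ makes every finite critical point of $F$ preperiodic. Consequently the Julia set of $F$ is connected and the B\"ottcher coordinate $\Phi$ at $\infty$ is a biholomorphism from $\C\setminus\mathcal K(F)$ (with $\mathcal K(F)$ the filled Julia set) onto $\{|z|>1\}$ conjugating $F$ to $z\mapsto z^{d^l}$; since $\phi^{-1}(\mathcal K(F))$ is a compact completely-$g$-invariant subset of $\G_m$ it equals the unit circle, a coefficient comparison gives $\Phi\circ\phi=\gamma_\infty t^{b}$ on $\{|t|>1\}$ and $\gamma_0 t^{-a}$ on $\{|t|<1\}$ with $\gamma_\infty^{d^l-1}=\gamma_0^{d^l-1}=1$, and therefore $\phi$ maps the unit circle onto $\mathcal K(F)$. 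Thus $\mathcal K(F)=J(F)$ is a real-analytic arc, and by the classification of polynomials whose Julia set is such an arc — equivalently, by completing the Riemann--Hurwitz bookkeeping for $\phi\colon\P^1\to\P^1$, which forces $a=b$ and exhibits the conjugacy — $F$ is conjugate to $\pm T_{d^l}$. This establishes $(\star)$.

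The main obstacle is case (b) of $(\star)$. Case (a) is rigid for a soft reason: the second fixed point $\phi(0)$ is forced to be superattracting, and the semiconjugacy then pins $F$ down completely. In case (b) there is no such extra superattracting fixed point, and one must either invoke the (non-elementary, complex-analytic) rigidity of polynomials with real-analytic arc Julia sets, or carry through the full ramification analysis of $\phi$ — balancing its ramification of orders $a$ and $b$ over $\infty$ against its at most $a+b$ finite critical values and the ramifications of $F$ and of $t\mapsto t^m$ — which is delicate because $\deg\phi$ is not bounded a priori. A secondary point worth noting: the B\"ottcher argument is complex-analytic, so to keep the whole proof over $\overline{\Q}$ one should work instead with the formal (rational) B\"ottcher parametrization of $F$ near $\infty$; this is routine.
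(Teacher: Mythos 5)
Your argument follows the paper's own route (Remark \ref{rmkgenDZ}): reduce to Theorem \ref{appaff} via the observation that an infinite subset of $\A^1$ is Zariski-dense, extract the semiconjugacy $f^{\circ l}\circ\phi=\phi\circ(t\mapsto t^{\pm d^l})$ with $\phi$ a Laurent polynomial, and conclude by the classical fact that a polynomial semiconjugate to a power map is linearly conjugate to a power map or to $\pm T_{d}$. The only difference is that the paper cites this last step (Milnor; Silverman) while you prove it, correctly in outline, with your case (b) ultimately resting on the standard rigidity of polynomials with real-analytic arc Julia sets --- i.e.\ on a classical result of the same kind the paper itself invokes by reference.
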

\begin{Rem}\label{rmkgenDZ}
	 Observe that $z^d$ and $-z^d$ are always (affine) conjugate, and that $T_d(-z)=(-1)^d T_d(z)$ and $-T_d(z)$ are always (affine) conjugate. Hence the conclusion is equivalent to saying that $f$ is affine conjugate to $(\varepsilon z)^d$ or $T_d(\varepsilon z)$ over $\overline{\Q}$ for some $\varepsilon\in\{\pm1\}$, which is the original statement in \cite{DZ07}.
	 
	 We view $f$ as an endomorphism on $X=\A^1$. Since $P$ is infinite, it is Zariski-dense in $\A^1$. We conclude by Theorem \ref{appaff} that $f$ is of strongly monomial type. Then there exist $n\in\Z$, $l\in\Z_{>0}$, and $h\in\overline{\Q}(z)\setminus\overline{\Q}$ such that $h\circ z^n=f^{\circ l}\circ h$, and we have $n=\pm d^l$. It is well-known that then the polynomial $f$ must be conjugate to $z^d$ or $\pm T_d(z)$ (see, for example, \cite[Lemma 3.8]{milnor2006lattes} and \cite[Proposition 6.3(a) and Theorem 6.9(b)]{241}). We thus recover the theorem of Dvornicich and Zannier from Theorem \ref{appaff}.
	 
	 Therefore, our Theorem \ref{appaff} can be viewed as a higher-dimensional generalization of Theorem \ref{thmDZ}.
\end{Rem}
For $d\geq2$, the power map $z^d$ on $\A^1$ restricts to the group endomorphism $z^d$ on $\G_m^1\subseteq\A^1$. The polynomial $\pm T_d(z)$ on $\A^1$ is the quotient of $\pm z^d$ (which is the translation of the group endomorphism $z^d$ on $\G_m^1$ by the torsion point $\pm1$) on $\G_m^1$ by the automorphism $z\mapsto z^{-1}$ on $\G_m^1$, via the isomorphism
$$\G_m^1/\{z=z^{-1}\}\xrightarrow{\sim}\A^1,z\mapsto z+z^{-1},$$
see \cite[\S 6.2]{241}. Hence $z^d$ and $\pm T_d(z)$ are of strongly monomial type. Combined with Remark \ref{rmkgenDZ}, we conclude that for a polynomial $f\in\C[z]$ of degree $d\geq 2$, $f$ is of monomial type if and only if $f$ is of strongly monomial type if and only if $f$ is conjugate to $z^d$ or $\pm T_d(z)$.

\medskip

After \cite{DZ07}, Ostafe \cite{Ostafe} and Chen \cite{Chen} studied cyclotomic points in backward orbits for rational maps on $\P^1$ with a periodic critical point. Note that for a one-variable polynomial, $\infty$ is a fixed critical point. Ostafe's result \cite{Ostafe} in particular implies that if $f$ is a rational map on $\P^1$ (of degree $d\geq2$) with a periodic critical point defined over a number field $K$, and there exists $x\in \P^1(K)$ such that there are infinitely many cyclotomic points in the backward orbit of $x$, then $f$ is conjugate to $z^d$ or $\pm T_d(z)$. See also \cite[Lemma 3.4]{Back}. Our Theorem \ref{back} partially generalizes Ostafe's result to higher dimensions. 

When $K=\Q$, any abelian extension of $\Q$ is contained within some cyclotomic field. Thus, our Theorems \ref{appaff}, \ref{back}, \ref{Henon}, and \ref{HenonA2} lead to results on the distribution of abelian points for higher-dimensional dynamical systems when $K=\Q$. For results about abelian points in backward orbits of rational maps on $\P^1$, see \cite{AP20,Back,LP25,Leung}.

\subsection{Sketch of the proof}
We now sketch the proof of Theorem \ref{thmmain}. The proof is divided into four steps. We make a base change and work over $\overline{\Q}=\overline{K}$.

In the first step, by a generalization of a theorem of Loxton proved by Dvornicich and Zannier (Theorem \ref{thmL}), the conditions (DCI) and (BH) imply that there exist an integer $b\geq1$ and a finite subset $E\subset K$ containing $0$ such that $P\subseteq (\sum_{i=1}^b E\cdot U(\C))^N$ in $\A^N(\C)$. For each $a=(a_{ij})_{1\leq i\leq N,1\leq j\leq b}\in E^{bN}$, we define a morphism
$$\phi_a:G_a=\G_m^{bN}\to\A^N,(z_{ij})_{i,j}\mapsto\left(\sum_{j=1}^b a_{ij}z_{ij}\right)_i,$$
a subset $\La_a=\phi_a^{-1}(P)\cap G_a(\overline{K})_\tors$ of torsion points, and a closed subset $Z_a=\overline{\La_a}^\Zar\subseteq G_a$. Note that $P=\bigcup_{a}\phi_a(\La_a)\subseteq\bigcup_{a}\phi_a(Z_a)\subseteq X$ is Zariski-dense in $X$. We shrink $P$ suitably and only consider $a$ in $M:=\{a\in E^{bN}:\overline{\phi_a(\La_a)}^\Zar=X\}$ such that $P=\cup_{a\in M}\phi_a(\La_a)$.

In the second step, we construct a correspondence $\Gamma$ from $Z:=\bigsqcup_{a\in M}Z_a$ to $Z$. Precisely, for every $(a_1,a_2)\in M^2$, we set
$$\La_{a_1,a_2}:=\{(\xi_1,\xi_2)\in\La_{a_1}\times\La_{a_2}:f(\phi_{a_1}(\xi_1))=\phi_{a_2}(\xi_2)\}$$
and $\Gamma_{a_1,a_2}:=\overline{\La_{a_1,a_2}}^\Zar$. We define $\Gamma:=\bigsqcup_{(a_1,a_2)\in M^2}\Gamma_{a_1,a_2}$. Then $\Gamma\subseteq Z\times Z$ and it can be viewed as a correspondence from $Z$ to $Z$. Let $\phi=\bigsqcup_{a\in M}\phi_a:Z\to X$. We prove that $\pi_1(\Gamma)=Z$, where $\pi_1:Z\times Z\to Z$ is the first projection, and that $\overline{\phi\times\phi(\Gamma)}^\Zar=\Gamma_f$, the graph of $f$ in $X\times X$.

In the third step, from the correspondence $\Gamma$, we construct a correspondence $\psi$ from $Y$ to $Y$ and a morphism $\phi: Y\to X$ after several reductions, satisfying the following properties:
\begin{enumerate}
	\item $Y=\G_m^{\gamma}$ for some integer $\gamma\geq d=\dim(X)$, and $\overline{\phi(Y)}^\Zar=X$;
	\item $\psi$ is a torsion coset (hence irreducible) in $Y\times Y=\G_m^{2\gamma}$ with $\pi_1(\psi)=Y$;
	\item $\overline{\phi\times\phi(\psi)}^\Zar=\Gamma_f$;
	\item \label{skt4} $\psi(Y):=\pi_2(\psi)=Y$.
\end{enumerate}
The well-known torsion points theorem (Theorem \ref{thmtorsion}) implies that every $Z_a$ (and $\Gamma_{a_1,a_2}$) is a finite union of torsion cosets. After replacing $f$ by a suitable iterate, we can choose $Y$ to be a suitable component of some $Z_a$ and $\psi$ to be a suitable component of $\Gamma\cap(Z_a\times Z_a)$. To obtain \eqref{skt4}, we replace $Y$ by its image under some iterate of $\psi$.

In the final step, we show that $\psi$ can be made into the graph of an algebraic group endomorphism $g:Y\to Y$ after further reductions. We replace $X$ by its normalization in $\overline{K}(Y)$. Set $T:=\cap_{y\in Y}T_y$, where $T_y:=\Stab_Y(F_y)$ is the stabilizer of the fiber $F_y:=\phi^{-1}(\phi(y))$ in $Y$. Then $\phi:Y\to X$ factors through the quotient $Y\to Y/T$. After replacing $Y$ by $Y/T$, we may assume that $T=1$ is trivial. Using the algebraic group structure of $Y$, we prove that in this case $\psi$ is the graph of a morphism $g:Y\to Y$. Since $\psi$ is a torsion coset, after a further iterate and changing the group structure on $Y$, we can assume that $g$ is a (surjective) algebraic group endomorphism, which completes the proof.

\medskip

As for the proof of Theorem \ref{chdimequal}, a result of cohomological hyperbolicity (Lemma \ref{chdeggrow}) and the theory of linear tori (see \S \ref{sectori}) are the main ingredients.

\section{Preliminaries}
We recall the theorem of Loxton on cyclotomic integers in \S \ref{secLoxton}. In \S \ref{sectori}, we recall basic notions and results of linear tori. In \S \ref{secdyndeg}, we recall some useful properties of dynamical degrees. 

\subsection{Cyclotomic extensions and Loxton theorem}\label{secLoxton}
Let $k$ be a field of characteristic zero. We denote the group of all roots of unity in $k$ by $U(k)$.

Fix an algebraic closure $\overline{k}$ of $k$. The \emph{maximal cyclotomic extension} of $k$ (in $\overline{k}$) is the field $k^c:=k(U(\overline{k}))$, i.e., the subfield of $\overline{k}$ generated over $k$ by all roots of unity in $\overline{k}$.

Suppose now that $k$ is a number field. A theorem of Loxton \cite[Theorem 1]{Loxton} implies that there exists a suitable (non-decreasing) function $L:\R_{\geq0}\to\R_{\geq0}$ such that for every algebraic integer $\alpha\in\Q^c$, $\alpha$ can be written as a sum of (not necessarily distinct) roots of unity, $\alpha=\sum_{i=1}^b\xi_i$ with $0\leq b\leq L(C(\alpha))$. Such a function $L$ is called a \emph{Loxton function}. One can take $L(x)\ll_{\varepsilon}x^{2+\varepsilon}$ as $x\to+\infty$, for an arbitrary $\varepsilon>0$. Dvornicich and Zannier \cite[Theorem L]{DZ07} extended Loxton's theorem to cyclotomic extensions of an arbitrary number field $k$:
\begin{Thm}[Dvornicich-Zannier]\label{thmL}
	Let $L$ be any Loxton function and $k$ be a number field. There exist a constant $B=B_k\in\R_{>0}$ and a finite set $E=E_k\subset k$ with $\#E\leq[k:\Q]$ such that every algebraic integer $\alpha\in \sO_{k^c}$ can be written as $\alpha=\sum_{i=1}^b\eta_i\xi_i$, where $\eta_i\in E$ and $\xi_i\in U(\overline{k})$ for $1\leq i\leq b$, with $0\leq b\leq(\#E) L(B\cdot C(\alpha))$.
\end{Thm}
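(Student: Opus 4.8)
The plan is to bootstrap the statement from Loxton's theorem over $\Q$ recalled above (every algebraic integer of $\Q^c$ is a sum of at most $L(C(\cdot))$ roots of unity, \cite[Theorem 1]{Loxton}). Fix an integral basis $v_1,\dots,v_n$ of $\sO_k$ over $\Z$, where $n=[k:\Q]$, and let $v_1^*,\dots,v_n^*\in k$ be its dual basis for the trace form, characterized by $\mathrm{Tr}_{k/\Q}(v_iv_j^*)=\delta_{ij}$; let $d\geq1$ be the absolute value of the discriminant of $k/\Q$, so that $d\,v_j^*\in\sO_k$ for every $j$ (a classical fact, via Cramer's rule applied to the integral, determinant $\pm d$, trace matrix). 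Given an algebraic integer $\alpha\in\sO_{k^c}$, pick $N\geq1$ with $\alpha\in\sO_{k'}$, where $k':=k(\zeta_N)$, and put $k_0:=\Q(\zeta_N)\subseteq k'$. The whole point is to write $\alpha$ as a $k$-linear combination, with denominators and houses controlled uniformly in $N$, of finitely many cyclotomic integers over $\Q$, and then to expand each of those by Loxton over $\Q$.

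The coefficients will be relative traces. Base-change the trace-form duality to the \'etale $k_0$-algebra $A:=k\otimes_\Q k_0$: the $v_j\otimes1$ form a $k_0$-basis of $A$ with dual basis $v_j^*\otimes1$, so every $y\in A$ satisfies $y=\sum_j\mathrm{Tr}_{A/k_0}\!\big(y\,(v_j^*\otimes1)\big)\,(v_j\otimes1)$. The multiplication map $\pi:A\to k'$, $x\otimes z\mapsto xz$, is a surjective $k_0$-algebra homomorphism realizing $k'$ as the factor of $A$ onto which $\pi$ projects; lifting $\alpha$ to the element $y\in A$ supported on that factor and applying $\pi$ to the identity above yields
$$\alpha=\sum_{j=1}^{n}\gamma_j\,v_j,\qquad \gamma_j:=\mathrm{Tr}_{k'/k_0}(\alpha\,v_j^*)\in k_0 .$$
This identity is what substitutes for the (generally false, when $k\cap k_0\neq\Q$) assertion that the $v_j$ remain a $k_0$-basis of $k'$.

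Now estimate the $\gamma_j$. Since $d\,v_j^*\in\sO_k$ and $\alpha\in\sO_{k'}$, the element $\alpha\,(d\,v_j^*)$ is an algebraic integer, hence so is $d\gamma_j=\mathrm{Tr}_{k'/k_0}\!\big(\alpha\,(d\,v_j^*)\big)$; as $d\gamma_j\in k_0$, we get $d\gamma_j\in\sO_{k_0}\subseteq\sO_{\Q^c}$. For the house: for any embedding $\tau:k_0\hookrightarrow\C$, $\tau(\gamma_j)=\sum_{\rho}\rho(\alpha)\,\rho(v_j^*)$ is a sum of $[k':k_0]\leq n$ terms, one for each extension $\rho:k'\hookrightarrow\C$ of $\tau$, and $|\rho(\alpha)|\leq C(\alpha)$, $|\rho(v_j^*)|\leq C(v_j^*)$. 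Hence $C(d\gamma_j)\leq B\cdot C(\alpha)$ with $B:=d\,n\max_{j}C(v_j^*)>0$, a constant depending only on $k$. Put $E:=\{v_1/d,\dots,v_n/d\}\subset k$, so $\#E=n=[k:\Q]$. Applying Loxton over $\Q$ to each $d\gamma_j\in\sO_{\Q^c}$ gives $d\gamma_j=\sum_{l=1}^{b_j}\xi_{jl}$ with $\xi_{jl}\in U(\overline{\Q})\subseteq U(\overline{k})$ and $b_j\leq L(C(d\gamma_j))\leq L(B\cdot C(\alpha))$, since $L$ is non-decreasing. Substituting, $\alpha=\sum_{j}(v_j/d)(d\gamma_j)=\sum_{j=1}^{n}\sum_{l=1}^{b_j}(v_j/d)\,\xi_{jl}$ displays $\alpha$ as a sum of $b:=\sum_j b_j\leq n\,L(B\cdot C(\alpha))=(\#E)\,L(B\cdot C(\alpha))$ terms of the required form (the empty sum when $\alpha=0$).

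I expect the only real obstacle to be the reconstruction step, i.e.\ finding coordinates and a coordinate-extraction functional that are simultaneously (a) defined uniformly in $N$, (b) integral after clearing one fixed denominator $d$, and (c) house-contracting by a factor independent of $N$. Requirement (a) forces one to give up the naive hope that $v_1,\dots,v_n$ stay independent over $k_0=\Q(\zeta_N)$ once $k$ meets $\Q(\zeta_N)$ nontrivially; the fix is to run the linear algebra in $k\otimes_\Q\Q(\zeta_N)$ and transport it to $k(\zeta_N)$ along the multiplication map. Once the displayed identity and the bound $C(d\gamma_j)\leq B\,C(\alpha)$ are secured, the rest is a formal invocation of Loxton over $\Q$.
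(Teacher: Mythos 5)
The paper does not prove this statement; it is quoted verbatim from Dvornicich--Zannier \cite[Theorem L]{DZ07}, so there is no in-paper proof to compare against. Your argument is correct and is, in substance, the expected derivation from Loxton's theorem over $\Q$: reconstruct $\alpha$ from the relative traces $\gamma_j=\mathrm{Tr}_{k(\zeta_N)/\Q(\zeta_N)}(\alpha v_j^*)$ against a fixed trace-dual system with denominators bounded by the discriminant, check that $d\gamma_j\in\sO_{\Q^c}$ with $C(d\gamma_j)\leq B\,C(\alpha)$ for a constant $B$ depending only on $k$ (since $[k(\zeta_N):\Q(\zeta_N)]\leq[k:\Q]$), and then expand each $d\gamma_j$ by Loxton. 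The one genuinely delicate point --- that $v_1,\dots,v_n$ need not stay linearly independent over $\Q(\zeta_N)$ when $k\cap\Q(\zeta_N)\neq\Q$, so one cannot naively invoke a dual basis inside $k(\zeta_N)$ --- is handled correctly by running the duality in the \'etale algebra $k\otimes_\Q\Q(\zeta_N)$ and pushing forward along the multiplication map; the resulting identity $\alpha=\sum_j\mathrm{Tr}_{k(\zeta_N)/\Q(\zeta_N)}(\alpha v_j^*)\,v_j$ is valid, as your idempotent computation shows. All the quantitative conclusions ($\#E\leq[k:\Q]$, $b\leq(\#E)L(B\,C(\alpha))$, monotonicity of $L$) follow as you state. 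I see no gap.
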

We deduce the following easy consequence of Theorem \ref{thmL}:
\begin{Cor}\label{thmLv}
	Let $k$ be a number field. Fix an integer $M\geq1$ and a constant $c\in\R_{>0}$. Then there exist $b=b(k,M,c)\in\Z_{>0}$ and a finite set $E=E(k,M,c)\subset k$ containing $0$ with $\#E\leq[k:\Q]+1$ such
	that for every $\alpha\in \frac{1}{M}\sO_{k^c}$ with $C(\alpha)\leq c$, we can write $\alpha$ as a sum $\alpha=\sum_{i=1}^b\eta_i\xi_i$, where $\eta_i\in E$ and $\xi_i\in U(\overline{k})$ for $1\leq i\leq b$.
\end{Cor}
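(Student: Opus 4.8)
The plan is to derive Corollary~\ref{thmLv} directly from Theorem~\ref{thmL} by clearing the denominator $M$ and then padding the resulting sums with zeros to a uniform length. First I would fix once and for all a Loxton function $L$ (say one with $L(x)\ll_\varepsilon x^{2+\varepsilon}$) and apply Theorem~\ref{thmL} to the number field $k$. This produces a constant $B=B_k\in\R_{>0}$ and a finite set $E_0=E_k\subset k$ with $\#E_0\leq[k:\Q]$ such that every $\beta\in\sO_{k^c}$ can be written as $\beta=\sum_{i=1}^{b'}\eta_i\xi_i$ with $\eta_i\in E_0$, $\xi_i\in U(\overline{k})$, and $0\leq b'\leq(\#E_0)\,L(B\cdot C(\beta))$.

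Next I would apply this to $\beta=M\alpha$, where $\alpha\in\frac1M\sO_{k^c}$ is arbitrary with $C(\alpha)\leq c$. Then $M\alpha\in\sO_{k^c}$, and since $|\sigma(M\alpha)|=M|\sigma(\alpha)|$ for every embedding $\sigma:\Q(\alpha)\hookrightarrow\C$, we have $C(M\alpha)=M\,C(\alpha)\leq Mc$ exactly. Because $L$ is non-decreasing, the number of terms satisfies $b'\leq(\#E_0)\,L(BMc)$, a bound depending only on $k$, $M$, and $c$. Dividing the obtained representation of $M\alpha$ by $M$ yields $\alpha=\sum_{i=1}^{b'}(\eta_i/M)\,\xi_i$.

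To conclude I would set $E:=\{0\}\cup\{\eta/M:\eta\in E_0\}$, so that $0\in E\subset k$ and $\#E\leq\#E_0+1\leq[k:\Q]+1$, and set $b:=\max\{1,\lceil(\#E_0)\,L(BMc)\rceil\}\in\Z_{>0}$. Each coefficient $\eta_i/M$ lies in $E$, and since $b\geq b'$ one may append $b-b'$ trivial terms $0\cdot 1$ (with $0\in E$ and $1\in U(\overline{k})$) to reach a sum of exactly $b$ terms; this also covers the case $\alpha=0$. This is the desired representation. There is essentially no obstacle in this argument: the only points requiring (minor) care are that the house scales exactly under multiplication by the rational integer $M$, that the monotonicity of the Loxton function converts the $\alpha$-dependent term count into a uniform one, and that $0$ is adjoined to $E$ precisely so that every admissible $\alpha$ can be padded out to the common length $b$.
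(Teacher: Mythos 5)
Your proposal is correct and follows exactly the paper's argument: apply Theorem~\ref{thmL} to $M\alpha$, use $C(M\alpha)=M\,C(\alpha)\leq Mc$ and the monotonicity of $L$ to get a uniform bound on the number of terms, divide by $M$, and adjoin $0$ to $E$ to pad to a fixed length $b$. No differences worth noting.
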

\begin{proof}[Proof of Corollary \ref{thmLv} from Theorem \ref{thmL}]
	Fix a (non-decreasing) Loxton function $L$. Let $B=B_k\in\R_{>0}$ and $E_0=E_k\subset k$ be given by Theorem \ref{thmL}.
	
	Set $b=\lceil (\#E_0)L(BMc)\rceil$ and $E=\{0\}\cup\{\eta/M:\eta\in E_0\}$. For every $\alpha\in \frac{1}{M}\sO_{k^c}$ with $C(\alpha)\leq c$, we have $M\alpha\in \sO_{k^c}$ and
	$C(M\alpha)=M\cdot C(\alpha)\leq Mc$,
	so the desired property follows from Theorem \ref{thmL}.
\end{proof}

\subsection{Linear tori and torsion points theorem}\label{sectori}
We work over an algebraically closed field $k$ of characteristic zero (for example, $k=\overline{\Q}$ or $\C$) in this subsection. We recall some basic notions and results about the algebraic group $\G_m^n$. See \cite[Chapter 3]{HtBook} for a detailed treatment.

Let $n\geq1$ be an integer. The \emph{linear torus of dimension $n$} is the commutative algebraic group $\G_m^n$, where $\G_m=\Spec (k[t,t^{-1}])$ is the $1$-dimensional multiplicative group (over $k$). The multiplication on $\G_m^n$ is given by pointwise multiplication. Every algebraic group endomorphism $\varphi:\G_m^n\to\G_m^n$ is of the form
$$\varphi=\varphi_A:(u_1,\dots,u_n)\mapsto(u_1^{a_{11}}\cdots u_n^{a_{1n}},\dots,u_1^{a_{n1}}\cdots u_n^{a_{nn}}),$$
for some integral matrix $A=(a_{ij})\in M_n(\Z)$ (see \cite[Proposition 3.2.17]{HtBook}). As in \cite{PR04}, $A$ (or $\varphi_A$) is called \emph{positive} if every eigenvalue of $A$ is neither $0$ nor a root of unity. Clearly, we have $\varphi_{AB}=\varphi_A\circ\varphi_B$ for $A,B\in M_n(\Z)$.

Every (not necessarily irreducible) algebraic subgroup $H$ of $\G_m^n$ is of the form
$$H=H_{\La}=\{u=(u_1,\dots,u_n)\in\G_m^n:u^a:=u_1^{a_1}\cdots u_n^{a_n}=1,\forall a=(a_1,\dots,a_n)\in \La\},$$
where $\La$ is a subgroup of $\Z^n$ (see \cite[Theorem 3.2.19(a)]{HtBook}). The subgroup $H_\La$ is irreducible if and only if $\La$ is \emph{primitive}, i.e., $(\La\otimes_\Z\R)\cap\Z^n=\La$ (see \cite[Corollary 3.2.8]{HtBook}).

A \emph{linear subtorus} $H$ of $\G_m^n$ is an irreducible algebraic subgroup of $\G_m^n$, which must be isomorphic to $\G_m^{\dim(H)}$ \cite[Corollary 3.2.8]{HtBook}.

A \emph{torsion coset} in $\G_m^n$ is an irreducible subvariety of $\G_m^n$ of the form $\varepsilon\cdot H$, where $H$ is a linear subtorus of $\G_m^n$ and $\varepsilon\in \G_m^n(k)_\tors$ is a torsion point in $\G_m^n$. Note that the set $X\cap\G_m^n(k)_\tors$ of torsion points in a torsion coset $X$ is Zariski-dense in $X$ (see \cite[Proposition 3.3.6]{HtBook}).

We recall the famous torsion points theorem (see \cite[Theorem 1.1]{UnlikelyBook}) regarding torsion points in $\G_m^n$, which solves the higher-dimensional version of an old problem of Lang \cite[p.~201]{LangDio}. It was proved by Laurent \cite{Laurent}. See also Sarnak and Adams \cite{SA94} for a different proof.
\begin{Thm}[Torsion points theorem]\label{thmtorsion}
	Let $\Sigma\subseteq\G_m^n(k)_\tors$ be a subset of torsion points of $\G_m^n$. Then the Zariski closure of $\Sigma$ is a finite union of torsion cosets in $\G_m^n$. In particular, for an irreducible subvariety $X$ of $\G_m^n$, $X\cap\G_m^n(k)_\tors$ is Zariski-dense in $X$ if and only if $X$ is a torsion coset.
\end{Thm}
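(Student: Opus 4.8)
The plan is to reduce the statement to a rigidity fact about a single irreducible subvariety with finite stabilizer, and then invoke the arithmetic of Galois orbits of torsion points. First I would observe that every torsion point of $\G_m^n$ has coordinates that are roots of unity, so $\Sigma\subseteq\G_m^n(\overline{\Q})$ and its Zariski closure $V:=\overline{\Sigma}^{\Zar}$ is defined over $\overline{\Q}$, hence over some number field $K$. Writing $V=W_1\cup\cdots\cup W_r$ as its decomposition into irreducible components, one checks that each $W_j$ coincides with $\overline{\Sigma\cap W_j}^{\Zar}$ (a component of $V$ cannot be contained in the union of the closures of the remaining pieces), so it suffices to prove the irreducible case: \emph{any irreducible subvariety $V\subseteq\G_m^n$ containing a Zariski-dense set of torsion points is a torsion coset}. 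Granting this, each $W_j$ is a torsion coset and $\overline{\Sigma}^{\Zar}=\bigcup_j W_j$ is a finite union of them. I would prove the irreducible case by induction on $\dim V$; the base case $\dim V=0$ is trivial, as $V$ is then a single closed point, which is torsion, hence a torsion coset with trivial torus part.

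Next I would reduce to the case of a \emph{finite} stabilizer. Let $H:=\Stab_{\G_m^n}(V)^{\circ}$, a subtorus defined over $K$. Since $H$ is connected and $HV=V$ has the same dimension as $V$, the variety $V$ is a union of $H$-cosets; hence the quotient $q:\G_m^n\to\G_m^n/H$ restricts on $V$ to a morphism whose fibres are single $H$-cosets, so $q(V)$ is irreducible of dimension $\dim V-\dim H$ and contains the Zariski-dense set of torsion points $q(\Sigma)$. If $H\neq 1$, the inductive hypothesis yields $q(V)=\varepsilon W$ for a subtorus $W$ and a torsion point $\varepsilon$; lifting $\varepsilon$ to a torsion point $\widetilde\varepsilon\in\G_m^n$, the preimage $q^{-1}(\varepsilon W)=\widetilde\varepsilon\cdot q^{-1}(W)$ is an irreducible torsion coset of dimension $\dim W+\dim H=\dim V$ containing the irreducible $V$ of equal dimension, whence $V=\widetilde\varepsilon\cdot q^{-1}(W)$ is a torsion coset. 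We may therefore assume $\Stab(V)$ is finite and $\dim V\geq 1$, and the goal becomes to show that such a $V$ \emph{cannot} carry a Zariski-dense set of torsion points — indeed that it contains only finitely many.

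This last step is the crux, and I expect it to be the main obstacle. The available input is a lower bound coming from Galois theory: if $\zeta\in V$ is a torsion point of exact order $m$, then every $\sigma\in\Gal(\overline{K}/K)$ sends $\zeta$ to $\zeta^{c(\sigma)}$ for a unique $c(\sigma)\in(\Z/m\Z)^{*}$, and, $V$ being defined over $K$, all of these pairwise distinct order-$m$ conjugates lie in $V$; hence $V$ contains at least $[K(\zeta):K]\geq\varphi(m)/[K:\Q]$ torsion points of order $m$. If torsion were Zariski-dense in our positive-dimensional $V$, then $V$ would contain torsion points of arbitrarily large order $m$, and since $\varphi(m)/[K:\Q]\gg_{\varepsilon}m^{1-\varepsilon}$ this forces $V$ to carry very many order-$m$ points. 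To close the argument one needs a matching \emph{upper} bound — uniform in $m$ and depending only on $n$ and $\deg V$ — on the number of order-$m$ torsion points lying on a positive-dimensional subvariety of $\G_m^n$ with finite stabilizer, strong enough to contradict the Galois lower bound; producing this uniform estimate is exactly the hard part. It is the content of Laurent's theorem as proved by Diophantine-approximation techniques in \cite{Laurent} (and, by an entirely different, dynamical route, of the equidistribution argument of Sarnak and Adams in \cite{SA94}), and for this final input I would quote those works, or the exposition in \cite{UnlikelyBook}. The ``in particular'' clause then follows immediately: torsion points are Zariski-dense on any torsion coset (recalled above), while the converse is the special case $\Sigma=X\cap\G_m^n(k)_{\tors}$ of what has just been shown.
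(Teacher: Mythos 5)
The paper does not prove Theorem \ref{thmtorsion} at all: it quotes it as Laurent's theorem, pointing to \cite{Laurent}, \cite{SA94}, and \cite[Theorem 1.1]{UnlikelyBook}. Your reductions (to the irreducible case and then to finite stabilizer via the quotient by $\Stab(V)^{\circ}$) are standard and correct, and the one input you leave as a citation --- the uniform upper bound on torsion points of a given order lying on a positive-dimensional subvariety --- is precisely the content of the references the paper itself relies on, so your proposal matches the paper's treatment.
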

\begin{Rem}
In Theorem \ref{thmtorsion}, if we replace $\G_m^n$ by an arbitrary abelian variety and torsion cosets by cosets of abelian subvarieties by torsion points, then we obtain the classical Manin-Mumford conjecture, proved by Raynaud \cite{RaynaudMM}. Hence, Theorem \ref{thmtorsion} is also called the ``multiplicative Manin-Mumford''.
\end{Rem}
The following lemma follows from \cite[Proposition 6.1]{PR04} directly.
\begin{Lem}\label{posperdense}
	Let $n\geq1$ and $A\in M_n(\Z)$ be positive. Then the set of $\varphi_A$-periodic points in $\G_m^n(k)$ is Zariski-dense in $\G_m^n$.
\end{Lem}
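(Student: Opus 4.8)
The plan is to exhibit $\varphi_A$-periodic points directly: I would show that every torsion point of $\G_m^n$ whose order is prime to $\det A$ is $\varphi_A$-periodic, and that the set of all such points is already Zariski-dense. Since $A$ is positive it has no zero eigenvalue, so $\Delta:=|\det A|\geq 1$; this is the only feature of positivity I expect to use.

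The first step is the claim that for every integer $e\geq 1$ with $\gcd(e,\Delta)=1$, each point of $\mu_e^n:=\{u\in\G_m^n(k):u_i^e=1,\ 1\leq i\leq n\}$ is $\varphi_A$-periodic. The reduction $\bar A\in M_n(\Z/e\Z)$ of $A$ has determinant a unit modulo $e$ (as $\gcd(\det A,e)=\gcd(\Delta,e)=1$), hence lies in the finite group $\GL_n(\Z/e\Z)$, so $A^m\equiv I\pmod{e}$ for some $m=m(e)\geq 1$. For $u\in\mu_e^n$, the $i$-th coordinate of $\varphi_{A^m}(u)$ equals $u_i\prod_j u_j^{(A^m-I)_{ij}}=u_i$, because $e\mid(A^m-I)_{ij}$ and $u_j^e=1$; hence $\varphi_A^{\circ m}(u)=\varphi_{A^m}(u)=u$.

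It then remains to prove that $G:=\bigcup_{e:\gcd(e,\Delta)=1}\mu_e^n$, a subgroup of $\G_m^n(k)$ contained in the $\varphi_A$-periodic locus, is Zariski-dense. I would argue that its Zariski closure $H:=\overline{G}^{\Zar}$, an algebraic subgroup, cannot be proper: if it were, its identity component $H^\circ$ would be a proper sub-torus, hence contained in $\ker\chi$ for some nontrivial character $\chi=x_1^{a_1}\cdots x_n^{a_n}$ (take any nonzero element of the lattice defining $H^\circ$), so $\chi$ would factor through the finite group $H/H^\circ$ and $\chi(G)\subseteq\chi(H)$ would be finite, say contained in $\mu_k$. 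Picking an index $i$ with $a_i\neq 0$ and a prime $p\nmid\Delta a_i$, the point $u\in\mu_p^n\subseteq G$ with $u_i$ a primitive $p$-th root of unity and all other coordinates $1$ satisfies $\chi(u)=u_i^{a_i}$, of exact order $p$; letting $p\to\infty$ contradicts the finiteness of $\chi(G)$. Therefore $H=\G_m^n$, which finishes the argument.

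I do not anticipate a genuine obstacle here. The only point requiring care is that torsion points of $\G_m^n$ need not be $\varphi_A$-periodic in general — many are merely preperiodic — and it is precisely the coprimality $\gcd(e,\det A)=1$, available because positivity guarantees $\det A\neq 0$, that isolates a Zariski-dense supply of truly periodic ones. (As noted in the text, the statement is also an immediate consequence of \cite[Proposition 6.1]{PR04}.)
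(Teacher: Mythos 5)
Your argument is correct, and it is genuinely different in character from what the paper does: the paper disposes of this lemma by a direct citation to \cite[Proposition 6.1]{PR04}, whereas you give a self-contained elementary proof. Both steps of your proof check out. The reduction $\bar A\in\GL_n(\Z/e\Z)$ when $\gcd(e,\det A)=1$ indeed has finite order $m$, and $\varphi_A^{\circ m}=\varphi_{A^m}$ fixes $\mu_e^n$ pointwise since $A^m\equiv I\pmod e$; and the density of $G=\bigcup_{\gcd(e,\det A)=1}\mu_e^n$ follows from your character argument (one could even shortcut it slightly: if $\overline{G}^{\Zar}=H_\La$ with $\La\neq0$, any nonzero $a\in\La$ gives a character killing $G$, which your choice of $u\in\mu_p^n$ with $p\nmid\Delta a_i$ immediately contradicts). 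What your route buys is transparency and a strictly weaker hypothesis --- you only use $\det A\neq0$, not the full positivity assumption that no eigenvalue is a root of unity --- so your statement is more general than the lemma as stated. What the citation buys the authors is uniformity: Pink--Roessler's Proposition 6.1 is formulated for semiabelian varieties and is the natural reference in the Manin--Mumford circle of ideas the paper is already invoking (Theorem \ref{thmtorsion}). Either justification is acceptable here.
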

For $A\in M_n(\Z)$ with $\det(A)\neq0$, we can always decompose $\varphi_A$ into the product of a factor $\varphi_{A_1}$ with all eigenvalues roots of unity, and a positive factor $\varphi_{A_2}$, up to a ramified cover.
\begin{Lem}\label{Gmndecomp}
	Let $n\geq1$ and $A\in M_n(\Z)$ with $\det(A)\neq0$. Then there exist integers $n_1,n_2\geq0$ with $n_1+n_2=n$, $A_1\in M_{n_1}(\Z)$ with all eigenvalues in $U(\overline{\Q})$, a positive $A_2\in M_{n_2}(\Z)$, and $P\in M_n(\Z)$ with $\det(P)\neq0$ such that $\varphi_A\circ\varphi_P=\varphi_P\circ g$, where $g=(\varphi_{A_1},\varphi_{A_2}):\G_m^n=\G_m^{n_1}\times\G_m^{n_2}\to\G_m^n=\G_m^{n_1}\times\G_m^{n_2}$ is given by $g(u_1,u_2)=(\varphi_{A_1}(u_1),\varphi_{A_2}(u_2))$.
\end{Lem}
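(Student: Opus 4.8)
The plan is to translate the statement about torus endomorphisms into a statement about conjugating the integer matrix $A$ into block-diagonal form over $\Q$ using an integral change of coordinates. Under the convention $\varphi_{AB}=\varphi_A\circ\varphi_B$, the desired identity $\varphi_A\circ\varphi_P=\varphi_P\circ g$ is equivalent to $\varphi_{AP}=\varphi_{PB}$ with $B=\diag(A_1,A_2)\in M_n(\Z)$ the block-diagonal matrix built from $A_1$ and $A_2$, so that $g=\varphi_B=(\varphi_{A_1},\varphi_{A_2})$; and this is in turn equivalent to the matrix identity $AP=PB$. Hence it suffices to produce $P\in M_n(\Z)$ with $\det(P)\neq0$ and integer matrices $A_1\in M_{n_1}(\Z)$, $A_2\in M_{n_2}(\Z)$ with $n_1+n_2=n$ such that $AP=P\diag(A_1,A_2)$, where every eigenvalue of $A_1$ is a root of unity and $A_2$ is positive.

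First I would split the characteristic polynomial. Since $\chi_A\in\Z[t]$ is monic, Gauss's lemma factors it into monic irreducible polynomials in $\Z[t]$, and because Galois conjugates of a root of unity are again roots of unity, each irreducible factor has either all of its roots in $U(\overline{\Q})$ or none of them. Let $f_1$ be the product (with multiplicities) of those irreducible factors all of whose roots lie in $U(\overline{\Q})$, and $f_2$ the product of the remaining ones; then $\chi_A=f_1f_2$, $\gcd(f_1,f_2)=1$, and since $\det(A)\neq0$ forces $0$ to not be a root of $\chi_A$, no root of $f_2$ is $0$ or a root of unity. Put $n_i=\deg f_i$. Since $f_1(A)f_2(A)=\chi_A(A)=0$ by Cayley--Hamilton and $\gcd(f_1,f_2)=1$, the primary decomposition gives an $A$-invariant splitting $\Q^n=V_1\oplus V_2$ with $V_i=\ker f_i(A)$, $\dim V_i=n_i$, and characteristic polynomial of $A|_{V_i}$ equal to $f_i$ (the coprimality ensures each eigenvalue contributes its full generalized eigenspace to exactly one $V_i$).

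Next I would descend to integral bases. Set $L_i=V_i\cap\Z^n$; this is a rank-$n_i$ lattice spanning $V_i$, and it is $A$-invariant since both $V_i$ and $\Z^n$ are. Choose a $\Z$-basis of $L_1$ and one of $L_2$, and let $P\in M_n(\Z)$ be the matrix whose columns are these $n$ vectors; then $\det(P)\neq0$, indeed $|\det(P)|=[\Z^n:L_1\oplus L_2]<\infty$ because the $L_i$ span $\Q^n$. Letting $A_i\in M_{n_i}(\Z)$ be the matrix of $A|_{L_i}$ in the chosen basis, applying $A$ columnwise to $P$ yields $AP=P\diag(A_1,A_2)$, and $A_i$ has characteristic polynomial $f_i$; hence every eigenvalue of $A_1$ lies in $U(\overline{\Q})$ and $A_2$ has no eigenvalue equal to $0$ or a root of unity, i.e.\ $A_2$ is positive. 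Translating back through $\varphi_{AP}=\varphi_{PB}$ gives $\varphi_A\circ\varphi_P=\varphi_P\circ g$, and the degenerate cases $n_1=0$ or $n_2=0$ (where one factor of $g$ acts on the trivial group $\G_m^0$) are covered automatically.

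I do not expect a serious obstacle: the mathematical content is the classical primary decomposition of $A$ over $\Q$ together with the bookkeeping of which irreducible factors of $\chi_A$ are cyclotomic. The only point requiring care is integrality --- one must use the saturated lattices $L_i=V_i\cap\Z^n$ rather than arbitrary $\Z$-spans so that the matrices $A_i$ come out integral, and one must keep track of $\det(P)\neq0$ without pretending $P\in\GL_n(\Z)$, since typically $[\Z^n:L_1\oplus L_2]>1$.
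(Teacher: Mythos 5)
Your proof is correct and follows essentially the same route as the paper: split $\chi_A$ into its cyclotomic and non-cyclotomic parts, block-diagonalize $A$ over $\Q$ into integer blocks $A_1,A_2$, and accept a conjugating matrix $P\in M_n(\Z)$ with $\det(P)\neq0$ rather than $P\in\GL_n(\Z)$. The paper compresses all of this into ``by linear algebra'' and restores integrality of the conjugator by scaling a rational $Q$ by an integer $N$, whereas you build $P$ directly from bases of the saturated lattices $V_i\cap\Z^n$; both are valid and the difference is only bookkeeping.
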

\begin{proof}
By linear algebra, take an invertible $Q\in\GL_n(\Q)$ such that $Q^{-1}AQ=\diag(A_1,A_2)$, where $n_1,n_2\geq0$ are integers with $n=n_1+n_2$, $A_1\in M_{n_1}(\Z)$ is a matrix with all eigenvalues in $U(\overline{\Q})$, and $A_2\in M_{n_2}(\Z)$ is positive. Fix an integer $N\geq1$ such that $NQ\in M_n(\Z)$. Then $P:=NQ$ satisfies $\varphi_A\circ\varphi_P=\varphi_P\circ(\varphi_{A_1},\varphi_{A_2})$.
\end{proof}

\subsection{Dynamical degrees}\label{secdyndeg}
Let $X$ be a quasi-projective variety of dimension $d\geq1$ over a field $k$ of characteristic zero, and let $f:X\dashrightarrow X$ be a dominant rational self-map of $X$. Denote the locus of indeterminacy of $f$ by $I(f)$. For a subset $U$ of $X$, let $U_f$ be the set of points in $U$ whose forward $f$-orbit is well-defined.

We have already defined the dynamical degrees $\la_i(f)$ ($0\leq i\leq d$) and the cohomological Lyapunov multipliers $\mu_i(f)$ ($1\leq i\leq d+1$) of $f$. We always have $\la_0(f)=1$. For every $l\in\Z_{>0}$, we have $\la_i(f^{\circ l})=\la_i(f)^l$ for $0\leq i\leq d$.

Let $Y$ be a quasi-projective variety over $k$ and $g:Y\dashrightarrow Y$ a dominant rational self-map of $Y$. Assume that there exists a dominant rational map $h:Y\dashrightarrow X$ such that $f\circ h=h\circ g$. Then we have $\la_{i}(g)\geq\la_{i}(f)$ for all $0\leq i\leq d$, and equalities hold if $h$ is generically finite. This can be proved by the theory of relative dynamical degrees; see \cite{DN11,Dang2020,Truong2020}. In particular, dynamical degrees and cohomological Lyapunov multipliers are birational invariants.

For $1\leq i\leq d$, $f$ is called \emph{$i$-cohomologically hyperbolic} if $\mu_i(f)>1>\mu_{i+1}(f)$, or equivalently, if $i$ is the unique integer in $\{0,1,\dots,d\}$ such that $\la_i(f)=\max\{\la_j(f):0\leq j\leq d\}$. Then $f$ is cohomologically hyperbolic if and only if $f$ is $i$-cohomologically hyperbolic for some $1\leq i\leq d$.

Assume now that $X$ is projective. Let $L$ be a line bundle on $X$ and $n\geq1$ be an integer. Let $C\subseteq X$ be a curve such that $C\nsubseteq I(f^{\circ n})$. Let $\Gamma$ be the graph of $f^{\circ n}$ in $X\times X$ and $\pi_j:\Gamma\to X$ be the $j$-th projection for $j=1,2$. Then $C\nsubseteq I(\pi_1^{-1})$. Define $(L_n\cdot C):=(\pi_2^*L\cdot C_\pi)$, where $C_\pi:=\overline{\pi_1^{-1}(C\setminus I(\pi_1^{-1}))}^\Zar$ is the strict transform of $C$ in $\Gamma$. The intersection number $(L_n\cdot C)$ is the desired one for ``$\left((f^{\circ n})^*(L)\cdot C)\right)$'', which can be computed using any sufficiently high projective models of $X$. See \cite{recursive} for more information.

The following lemma is \cite[Lemma 6.5]{recursive}:
\begin{Lem}\label{chdeggrow}
	Let $X$ be a projective variety of dimension $d\geq1$ over a field $k$ of characteristic zero, and let $f:X\dashrightarrow X$ be a dominant rational self-map of $X$. Let $L$ be an ample line bundle on $X$. Assume that $f$ is $i$-cohomologically hyperbolic where $1\leq i\leq d$. Then for every $1\leq \beta<\mu_i(f)$, there exists a non-empty affine Zariski open subset $U\subseteq X$ such that for every irreducible curve $C\subseteq X$ with $C\cap U_f\neq\emptyset$ and $\dim(f^{\circ n}(C))=1$ for all $n\geq1$, we have
	$$\liminf_{n\to\infty}(L_n\cdot C)^{1/n}\geq\beta.$$
\end{Lem}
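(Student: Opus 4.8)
The plan is to bound $(L_n\cdot C)$ from below by comparing it with the ambient mixed degrees $\deg_{i,L}(f^{\circ n})$ and $\deg_{i-1,L}(f^{\circ n})$, whose quotient grows like $\mu_i(f)^n=(\la_i(f)/\la_{i-1}(f))^n$. First I would unwind the definition of the intersection number. Writing $\Gamma_n$ for the graph of $f^{\circ n}$ with projections $\pi_1,\pi_2$ and $C_\pi$ for the strict transform of $C$, one has $(L_n\cdot C)=(\pi_2^*L\cdot C_\pi)=(L\cdot\pi_{2*}C_\pi)$; since $\pi_1\colon C_\pi\to C$ is birational we get $\pi_{2*}C_\pi=e_n\cdot[f^{\circ n}(C)]$ with $e_n=\deg(f^{\circ n}|_C)$, and the hypothesis $\dim f^{\circ n}(C)=1$ forces this to be a genuine multiple of a $1$-cycle rather than supported at a point, so $(L_n\cdot C)=e_n\cdot\deg_L f^{\circ n}(C)\geq 1$ for every $n$. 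The content of the lemma is that this positive quantity cannot decay exponentially below $\mu_i(f)^n$, and equivalently that neither the map-degree $e_n$ nor the image-degree $\deg_L f^{\circ n}(C)$ can both be too small simultaneously once $C$ avoids a suitable fixed closed set.

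The next step is the key comparison with the generic picture. On $\Gamma_n$, set $\Theta_n:=(\pi_2^*L)^{i-1}(\pi_1^*L)^{d-i}$, a $1$-cycle class. By the projection formula, $\deg_{i,L}(f^{\circ n})=(\pi_2^*L\cdot\Theta_n)=(L\cdot\pi_{2*}\Theta_n)$ and $\deg_{i-1,L}(f^{\circ n})=(\pi_1^*L\cdot\Theta_n)=(L\cdot\pi_{1*}\Theta_n)$; in other words, $\deg_{i,L}(f^{\circ n})$ and $\deg_{i-1,L}(f^{\circ n})$ are respectively the $L$-degree of the $f^{\circ n}$-image of one and the same generic curve $D_n:=\pi_{1*}\Theta_n$ on $X$ and the $L$-degree of $D_n$ itself. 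Hence the target growth rate $\mu_i(f)^n$ is, up to bounded factors, the ratio of the $L$-degree of the forward image of a generic curve to the $L$-degree of that curve: $(L_n\cdot D_n)\asymp\mu_i(f)^n\cdot\deg_L D_n$. The strategy is then to transfer this comparison to our fixed $C$, controlling the ``non-genericity loss'' incurred when one passes from the movable generic cycles $\Theta_n$ (or a generic complete-intersection curve, whose forward-image degree realises $\deg_{1,L}(f^{\circ n})\geq\deg_{i,L}(f^{\circ n})/\deg_{i-1,L}(f^{\circ n})$) to $C$: by upper semicontinuity of fibre degrees, the loss under $\pi_2$ is non-negative and only helps, while the loss under $\pi_1$ is measured by how $C$ meets the exceptional locus of $\pi_1$, i.e.\ the indeterminacy and contracted loci of the iterates $f^{\circ n}$. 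The open set $U$ must be chosen as the complement of a fixed proper closed subset $Z\subseteq X$ that absorbs all of these contributions uniformly in $n$ (this is the place where the precise hypotheses on $U$ and on $C\cap U_f$ are used, and it is itself a nontrivial point: $Z$ should be built from the ``slow'' subvarieties on which the restricted dynamics is subexponential, and cohomological hyperbolicity is what guarantees $Z\neq X$). Granting such a $Z$, for $C$ meeting $U_f$ one obtains $(L_n\cdot C)\geq c'\cdot\deg_{i,L}(f^{\circ n})/\deg_{i-1,L}(f^{\circ n})$ up to a polynomially bounded factor; taking $n$-th roots and using $\lim_n\deg_{j,L}(f^{\circ n})^{1/n}=\la_j(f)$ gives $\liminf_n(L_n\cdot C)^{1/n}\geq\la_i(f)/\la_{i-1}(f)=\mu_i(f)>\beta$, as desired.

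The main obstacle is exactly this last transfer. Specialisation arguments produce, for free, lower bounds for $(f^{\circ n})^*$ and upper bounds for $(f^{\circ n})_*$, so extracting a \emph{uniform exponential lower} bound for $(L_n\cdot C)$ requires (i) isolating all the non-genericity loss into a single closed set $Z$ independent of $n$ and of $C$, which forces one to understand the ``exceptional dynamics'' of $f$ — periodic subvarieties on which the restriction of $f$ has slow degree growth, and the orbits of the base loci of the iterates — and (ii) showing the total loss accumulated along the forward orbit grows no faster than $\la_{i-1}(f)^n$, which is strictly smaller than $\la_i(f)^n$ precisely because $f$ is $i$-cohomologically hyperbolic (this strict gap is the only way hyperbolicity enters). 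Organising (i)–(ii) — including bookkeeping on high projective models $X_n$ of $X$ on which the numbers $(L_n\cdot C)$ are computed, and controlling the multiplicities $e_n$ — is where the real work lies; I would expect to lean on the theory of relative dynamical degrees and nef classes on families of models (à la Dinh–Sibony, Dang, Truong) to carry it out cleanly.
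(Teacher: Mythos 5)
This lemma is not proved in the paper at all: it is quoted verbatim from \cite[Lemma 6.5]{recursive}, so the only ``proof'' the paper offers is that citation. Your write-up is an outline of what such a proof would have to accomplish rather than a proof, and the outline stops exactly at the point where the lemma's content begins. The first paragraph (unwinding $(L_n\cdot C)=e_n\cdot\deg_L f^{\circ n}(C)$) and the identification of $\mu_i(f)^n$ with the ratio $\deg_{i,L}(f^{\circ n})/\deg_{i-1,L}(f^{\circ n})$ are fine, but the entire assertion of the lemma is the existence of a \emph{single} open set $U$, independent of $n$ and of $C$, such that every curve meeting $U_f$ obeys the exponential lower bound. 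You explicitly defer this (``this is where the real work lies''), so there is no argument to check: you have restated the lemma in the language of a strategy.

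Two specific points in the sketch would also not survive being made precise as written. First, the comparison cycles $D_n=\pi_{1*}\bigl((\pi_2^*L)^{i-1}(\pi_1^*L)^{d-i}\bigr)$ vary with $n$, and ``upper semicontinuity of fibre degrees'' runs in the wrong direction for your purpose: specialization from a generic complete-intersection curve to a fixed $C$ yields \emph{upper} bounds on quantities like $\deg_L f^{\circ n}(C)$, whereas you need a uniform \emph{lower} bound; a special curve can genuinely have $(L_n\cdot C)$ growing slower than $\mu_i(f)^n$ (e.g.\ a curve inside an invariant subvariety on which $f$ restricts to a slow map), and the lemma is precisely the statement that all such curves are trapped in a fixed proper closed subset. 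Second, detecting that closed subset is not a matter of bookkeeping: in the cited reference it comes from a positivity (Siu/bigness) argument on products of high birational models, producing recursive inequalities of the form $(L_{n+m}\cdot C)\gtrsim \frac{\deg_{i,L}(f^{\circ m})}{\deg_{i-1,L}(f^{\circ m})}(L_n\cdot C)-(\text{error supported on }X\setminus U)$, and $U$ is the complement of the augmented base locus of a big class whose bigness is exactly where $i$-cohomological hyperbolicity is used. Without carrying out that construction, the proposal does not establish the lemma.
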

The dynamical degrees of group endomorphisms of $\G_m^n$ are known, see \cite[Theorem 1]{Lin12} or \cite[Corollary B]{FW12}.
\begin{Prop}\label{Gmndyndeg}
	Let $n\geq1$ and $A\in M_n(\Z)$ with $\det(A)\neq0$. Consider the group endomorphism $\varphi_A:\G_m^n\to\G_m^n$ over a field $k$ of characteristic zero. Let $\nu_1,\dots,\nu_n$ be the eigenvalues of $A$ in $\C$ (counted with multiplicity) such that $\left|\nu_1\right|\geq\cdots\geq\left|\nu_n\right|>0$. Then $\la_i(\varphi_A)=\left|\nu_1\cdots\nu_i\right|$ for $1\leq i\leq n$. In particular, $\varphi_A$ is cohomologically hyperbolic if and only if $A$ has no eigenvalues with absolute value $1$.
\end{Prop}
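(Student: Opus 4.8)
The plan is to compute each dynamical degree $\la_i(\varphi_A)$ in terms of the exterior power $\bigwedge^i A$ and then read off the formula. Observe first that $\varphi_A^{\circ m}=\varphi_{A^m}$, and that $\det A\neq 0$ forces every $\nu_j\neq 0$, so the ordering $|\nu_1|\ge\cdots\ge|\nu_n|>0$ is meaningful. Since dynamical degrees are insensitive to base change of the (characteristic-zero) ground field, I may and will work over $\overline k$.

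The key input, which is essentially \cite{Lin12,FW12}, is an explicit formula for the degrees of a monomial map relative to the standard ample bundle on $(\P^1)^n$. Using the compactification $\G_m^n\subseteq(\P^1)^n$ with $L=\sum_{j=1}^n\pi_j^*\sO_{\P^1}(1)$ (ample, hence nef and big, with $\pi_j$ the $j$-th projection), one has, for every $B\in M_n(\Z)$ and $0\le i\le n$,
$$\deg_{i,L}(\varphi_B)=c_{n,i}\sum_{\#I=\#J=i}\bigl|\det B_{I,J}\bigr|,$$
where $I,J$ range over the $i$-element subsets of $\{1,\dots,n\}$, $B_{I,J}=(b_{kl})_{k\in I,\,l\in J}$, and $c_{n,i}$ is a combinatorial constant independent of $B$. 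To establish this directly I would pass to the graph $\Gamma_B\subseteq(\P^1)^n\times(\P^1)^n$ of $\varphi_B$, expand $(\pi_2^*L)^i(\pi_1^*L)^{n-i}$ by multilinearity (a product containing a repeated factor vanishes because $\sO_{\P^1}(1)^2=0$), and interpret each surviving top intersection number as a count of points of $\Gamma_B$ lying over the dense torus: prescribing $n-i$ of the source coordinates and $i$ of the target coordinates to be generic constants yields $i$ monomial equations in the remaining $i$ source variables, which has exactly $|\det B_{I,J}|$ solutions in $\G_m^i$ for generic choices of the constants.

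Granting this formula, the conclusion follows by linear algebra. As $\bigwedge^i$ is multiplicative, $\det\bigl((A^m)_{I,J}\bigr)$ is the $(I,J)$-entry of $(\bigwedge^i A)^m=\bigwedge^i(A^m)$, so $\deg_{i,L}(\varphi_{A^m})=c_{n,i}\bigl\|(\bigwedge^i A)^m\bigr\|$ for the norm ``sum of absolute values of the entries'' on $\mathrm{End}(\bigwedge^i\C^n)$. By Gelfand's spectral-radius formula,
$$\la_i(\varphi_A)=\lim_{m\to\infty}\deg_{i,L}(\varphi_{A^m})^{1/m}=\rho\bigl(\bigwedge\nolimits^i A\bigr).$$
Since the eigenvalues of $\bigwedge^i A$ are the products $\nu_{j_1}\cdots\nu_{j_i}$ over $i$-subsets $\{j_1<\cdots<j_i\}$ and $|\nu_1|\ge\cdots\ge|\nu_n|$, the one of largest absolute value is $\nu_1\cdots\nu_i$; hence $\la_i(\varphi_A)=|\nu_1\cdots\nu_i|$. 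For the last assertion it then suffices to observe that $\mu_i(\varphi_A)=\la_i(\varphi_A)/\la_{i-1}(\varphi_A)=|\nu_i|$ for $1\le i\le n$ and $\mu_{n+1}(\varphi_A)=0$, so $\varphi_A$ is cohomologically hyperbolic, i.e.\ $\mu_i(\varphi_A)\neq 1$ for all $1\le i\le n+1$, precisely when $|\nu_i|\neq 1$ for every $1\le i\le n$, i.e.\ when $A$ has no eigenvalue of absolute value $1$.

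I expect the main obstacle to be the degree formula itself — concretely, verifying that for generic choices of the prescribed constants the boundary $\Gamma_B\setminus\{(u,\varphi_B(u)):u\in\G_m^n\}$ contributes nothing to the intersection numbers above. This boundary analysis is where the toric geometry of $(\P^1)^n$ (or of a toric model on which a high iterate of $\varphi_B$ becomes algebraically stable) genuinely enters, and it is exactly the content that may instead be imported from \cite{Lin12} or \cite{FW12}; once it is in hand, the remaining steps are routine.
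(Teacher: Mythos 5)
Your proof is correct. The paper does not actually prove Proposition \ref{Gmndyndeg}; it quotes it from \cite{Lin12} and \cite{FW12}, and your argument is essentially the standard proof underlying those references: compute $\deg_{i,L}(\varphi_B)$ on $(\P^1)^n$ as a constant times $\sum_{|I|=|J|=i}|\det B_{I,J}|$, recognize this as a norm of $\bigwedge^i B$, and pass to the limit via Gelfand's formula to get $\la_i(\varphi_A)=\rho(\bigwedge^i A)=|\nu_1\cdots\nu_i|$. The one step you flag as a genuine obstacle -- the boundary contribution -- needs no toric stability argument: the linear systems $|\pi_1^*h_k|$ and $|\pi_2^*h_j|$ restricted to the graph are base-point free, so in characteristic zero Kleiman transversality lets generic members avoid the at most $(n-1)$-dimensional part of the graph not lying over the torus and meet the rest transversally, making the point count $|\det B_{J,I}|$ (interpreted as $0$ when the minor vanishes, since the corresponding monomial map then has non-dense image) exact.
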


\section{Proof of the main results}\label{seccycmainpf}
\begin{proof}[Proof of Theorem \ref{thmmain}]
We say a subset $Q\subset X(\overline{K})$ is exceptional if $\overline{Q}^\Zar\subsetneq X$. Note that $P\setminus Q$ still satisfies the conditions (DCI), (BH), and (AI) for every exceptional $Q\subset X(\overline{K})$, since the endomorphism $f:X\to X$ is dominant. Thus, we are free to remove an exceptional subset from $P$.

The following proof is divided into several steps.

\medskip

{\bfseries Step 1: Apply Loxton's Theorem.}

We fix an integer $b=b(K,M,c)\in\Z_{>0}$ and a subset $0\in E=E(K,M,c)\subset K$ with $\#E\leq[K:\Q]+1$ as in Corollary \ref{thmLv}, such that for every $\alpha\in\frac{1}{M}\sO_{K^c}$ with $C(\alpha)\leq c$, we have $\alpha\in\sum_{j=1}^b E\cdot U(\C)$. By the conditions (DCI) and (BH), we see that for every $y=(y_1,\dots,y_N)\in P$, we have $y_i\in\sum_{j=1}^b E\cdot U(\C)$, $1\leq i\leq N$.

Set $M_1=E^{bN}$ and $G=\G_{m,K}^{bN}$. Note that $M_1$ is a finite set. We say a point $y=(y_1,\dots,y_N)\in X(\overline{K})$ has a lift $\xi=(\xi_{ij})_{1\leq i\leq N,1\leq j\leq b}\in G(\overline{K})_\tors$ of type $a=(a_{ij})_{1\leq i\leq N,1\leq j\leq b}\in M_1$ if $y_i=\sum_{j=1}^b a_{ij}\xi_{ij}$ for all $1\leq i\leq N$.

For every $a=(a_{ij})\in M_1$, we set
\begin{align*}
	\La_a&=\{\xi\in G(\overline{K})_\tors:\xi\text{ is a lift of some point }y\in P\text{ of type }a\},\\
	\phi_a&:G\to\A^N,(z_{ij})_{1\leq i\leq N,1\leq j\leq b}\mapsto\left(\sum_{j=1}^b a_{1j}z_{1j},\dots,\sum_{j=1}^b a_{Nj}z_{Nj}\right)
\end{align*}
Then for every $a\in M_1$, the following statements hold:
\begin{itemize}
	\item $\xi\in G(\overline{K})_\tors$ is a lift of $y\in P$ if and only if $\phi_a(\xi)=y$;
	\item $\phi_a(\La_a)\subseteq P\subseteq X(K^c)$.
\end{itemize}

Set $M=\{a\in M_1:\La_a\neq\emptyset\}\subseteq M_1=E^{bN}$. After replacing $P$ by $P\setminus Q$ for a suitable exceptional subset $Q\subset P$, we may assume that
\begin{equation}\label{phiZadense}
	\overline{\phi_a(\La_a)}^\Zar=X
\end{equation} for every $a\in M$. Note that we have $\cup_{a\in M}\phi_a(\La_a)=P$.

From now on, we make base change to $\overline{K}$ for all varieties and morphisms, and we omit the notation $\overline{K}$ in subscript for simplicity.

For every $a\in M$, let $G_a:=G$ be a copy of $G$ indexed by $a$, and let $Z_a:=\overline{\La_a}^\Zar\subseteq G_a$. We still denote $\phi_a:G_a=G\to\A^N$. Clearly, $\phi_a(Z_a)\subseteq X$ since $\overline{\phi_a(\La_a)}^\Zar=X$. For every exceptional $Q\subset P$, we set $Z_{a,Q}:=\overline{\La_a\setminus\phi_a^{-1}(Q)}^\Zar\subseteq G_a$. By noetherianity, after replacing $P$ by $P\setminus P_\excp$ for a suitable exceptional subset $P_\excp\subset P$, we may assume that
\begin{equation}\label{Zexp}
Z_a=Z_{a,Q}
\end{equation}
for every $a\in M$ and exceptional $Q\subset P$.

Then for every $a\in M$ and every irreducible component $Z_0$ of $Z_a$, we have
\begin{equation}\label{compnotpt}
	\dim(Z_0)\geq1.
\end{equation}
(Otherwise, if $Z_0=\{\zeta\}$ is an irreducible component of $Z_a$ of dimension $0$ for some $a\in M$, then $\zeta\notin Z_{a,\{\phi_a(\zeta)\}}=Z_a$, a contradiction.)

\medskip

{\bfseries Step 2: Construct a Correspondence.}

Using the torsion coset theorem, we construct a correspondence $\Gamma$ from $Z$ to $Z$ in this step.

For all $a_1,a_2\in M$, set
$$\La_{a_1,a_2}:=\{(\xi_1,\xi_2)\in\La_{a_1}\times\La_{a_2}:f(\phi_{a_1}(\xi_1))=\phi_{a_2}(\xi_2)\}\subseteq G_{a_1}(\overline{K})_\tors\times G_{a_2}(\overline{K})_\tors$$
and $\Gamma_{a_1,a_2}:=\overline{\La_{a_1,a_2}}^\Zar\subseteq G_{a_1}\times G_{a_2}$.

Set
\begin{align*}
	\La&:=\bigsqcup_{(a_1,a_2)\in M^2}\La_{a_1,a_2}\subseteq \bigsqcup_{(a_1,a_2)\in M^2} G_{a_1}\times G_{a_2}=\left(\bigsqcup_{a\in M}G_a\right)^2,\\
	\Gamma&:=\overline{\La}^\Zar=\bigsqcup_{(a_1,a_2)\in M^2}\Gamma_{a_1,a_2}\subseteq \left(\bigsqcup_{a\in M}G_a\right)^2,\text{ and}\\
	Z&:=\bigsqcup_{a\in M}Z_a\subseteq \bigsqcup_{a\in M}G_a.
\end{align*}

From the construction, for every $(a_1,a_2)\in M^2$, torsion points are dense in the closed subset $\Gamma_{a_1,a_2}\subseteq G_{a_1}\times G_{a_2}=\G_m^{2bN}$, so $\Gamma_{a_1,a_2}$ is a finite union of torsion cosets in $G_{a_1}\times G_{a_2}$ by Theorem \ref{thmtorsion}. Similarly, for every $a\in M$, the closed subset $Z_a\subseteq G_a$ is a finite union of torsion cosets in $G_a$.

Denote the first and second projections $(\bigsqcup_{a\in M}G_a)^2\to\bigsqcup_{a\in M}G_a$ by $\pi_1$ and $\pi_2$, respectively. 

We show that $\pi_1(\Gamma)=Z$. Since $\Gamma_{a_1,a_2}$ is a finite union of torsion cosets in $G_{a_1}\times G_{a_2}$ for all $a_1,a_2\in M$, by \cite[Proposition 3.2.18]{HtBook}, the set $\pi_1(\Gamma)$ is closed in $\bigsqcup_{a\in M}G_a$. Clearly, we have $\pi_1(\Gamma)\subseteq Z$ from the definition. Fix an arbitrary $a\in M$. For every $\xi\in\La_a\setminus\phi_a^{-1}(P\setminus f^{-1}(P))$, since $\cup_{a_2\in M}\phi_{a_2}(\La_{a_2})=P$, there exists $(\xi,\xi_2)\in\Gamma_{a,a_2}$ for some $a_2\in M$ and $\xi_2\in\Gamma_2$. Thus
$$\La_a\setminus\phi_a^{-1}(P\setminus f^{-1}(P))\subseteq\bigcup_{a_2\in M}\pi_1(\Gamma_{a,a_2})=\pi_1(\Gamma)\cap G_a.$$
Taking closure, we get
$$Z_a=Z_{a,P\setminus f^{-1}(P)}=\overline{\La_a\setminus\phi_a^{-1}(P\setminus f^{-1}(P))}^\Zar\subseteq \pi_1(\Gamma)\cap G_a,$$
where the first equality follows from \eqref{Zexp} and (AI). As $a\in M$ is arbitrary, we deduce that $Z=\bigsqcup_{a\in M}Z_a\subseteq\pi_1(\Gamma)$, hence $\pi_1(\Gamma)=Z$.

From the definition, we see that $\pi_2(\Gamma)\subseteq Z$, so $\Gamma\subseteq Z\times Z$. We view the closed subset $\Gamma$ (with the reduced structure) as a correspondence from $Z$ to $Z$.

Define a morphism $\phi=\bigsqcup_{a\in M}\phi_a:Z\to X$ by $\phi=\phi_a$ on $Z_a$ for every $a\in M$.

Denote the graph of $f$ by $\Gamma_f\subseteq X\times X$, which is closed in $X\times X$ and can be viewed as a correspondence from $X$ to $X$. Let $a\in M$ be an arbitrary element in $M$ and set $\Gamma_a:=\bigsqcup_{a_2\in M}\Gamma_{a,a_2}$. Next we show
$$\overline{\phi\times\phi(\Gamma)}^\Zar=\bigcup_{a\in M}\overline{\phi\times\phi(\Gamma_a)}^\Zar=\Gamma_f\subseteq X\times X.$$
Indeed, from the definition we have $\phi\times\phi(\La)\subseteq \Gamma_f$, then $\phi\times\phi(\Gamma)=\phi\times\phi(\overline{\La}^\Zar)\subseteq \Gamma_f$, and hence
$$\overline{\phi\times\phi(\Gamma_a)}^\Zar\subseteq\overline{\phi\times\phi(\Gamma)}^\Zar\subseteq\Gamma_f.$$
Note that $\phi(Z_a)=\phi_a(Z_a)$ is Zariski-dense in $X$ by assumption \eqref{phiZadense}. We still denote the first projection $X\times X\to X$ by $\pi_1$. Then
$$\pi_1(\phi\times\phi(\Gamma_a))=\phi(\pi_1(\Gamma_a))=\phi(Z_a)$$
is Zariski-dense (and contained) in $X$, since $\pi_1(\Gamma_a)=\pi_1(\Gamma)\cap G_a=Z\cap G_a=Z_a$. Thus
$$\dim(\overline{\phi\times\phi(\Gamma_a)}^\Zar)\geq\dim(\overline{\pi_1(\phi\times\phi(\Gamma_a))}^\Zar)=\dim(X)=d=\dim(\Gamma_f),$$
so we must have $\overline{\phi\times\phi(\Gamma)}^\Zar=\Gamma_f$, since $\Gamma_f\cong X$ is geometrically irreducible and $\overline{\phi\times\phi(\Gamma_a)}^\Zar\subseteq\Gamma_f$.

\medskip

{\bfseries Step 3: Irreducibility and Surjectivity.}

In this step, we make further reductions to show that from the correspondence $\Gamma$ from $Z$ to $Z$, we can construct a correspondence $\psi$ from $Y$ to $Y$ such that both $Y$ and $\psi\subseteq Y\times Y$ are irreducible, and $\psi$ is surjective (i.e., $\pi_2(\psi)=Y$).

Recall that each $Z_a$ is a finite union of torsion cosets in $G_a=\G_m^{bN}$. Then we can write $Z=\bigsqcup_{a\in M}Z_a$ as a finite union $Z=\cup_{\alpha\in I}Y_\alpha$, where $I$ is a non-empty index set and for every $\alpha\in I$, we have $Y_\alpha\cong\G_m^{\gamma_\alpha}$ as $\overline{K}$-varieties for some $\gamma_\alpha\in\Z_{>0}$. (We can take the $Y_\alpha$'s to be all the irreducible components of $Z$. We have shown that the dimension $\gamma_\alpha$ is strictly positive; see \eqref{compnotpt}.)

Replacing $Z$ by $\bigsqcup_{\alpha\in I}Y_\alpha$ and lifting $\La\subseteq Z\times Z,\Gamma\subseteq Z\times Z$ (resp. $\phi:Z\to X$) to $(\bigsqcup_{\alpha\in I}Y_\alpha)\times (\bigsqcup_{\alpha\in I}Y_\alpha)$ (resp. $\bigsqcup_{\alpha\in I}Y_\alpha$) via the natural morphism $\bigsqcup_{\alpha\in I}Y_\alpha\to Z$, we may assume that $Z=\bigsqcup_{\alpha\in I}Y_\alpha$ is a disjoint union.

For each $\alpha\in I$, we identify $Y_\alpha=\G_m^{\gamma_\alpha}$. Precisely, for a given $\alpha\in I$, assume that $Y_\alpha=\varepsilon_\alpha\cdot H_\alpha\subseteq G_a=\G_m^{bN}$ for some $a=a(\alpha)\in M$, a torsion point $\varepsilon_\alpha\in G_a(\overline{K})_\tors$, and a linear subtorus $H_\alpha\leqslant G_a$ of dimension $\gamma_\alpha$. We identify $Y_\alpha$ with $\G_m^{\gamma_\alpha}$ via the isomorphism $\G_m^{\gamma_\alpha}\cong H_\alpha\xrightarrow{\sim} \varepsilon_\alpha\cdot H_\alpha,H_\alpha\ni y\mapsto\varepsilon_\alpha y\in\varepsilon_\alpha\cdot H_\alpha$ of $\overline{K}$-varieties.

We write $\Gamma=\bigsqcup_{(\alpha,\beta)\in I^2}\Gamma_{\alpha,\beta}$, where $\Gamma_{\alpha,\beta}=\Gamma\cap (Y_\alpha\times Y_\beta)$. For every $(\alpha,\beta)\in I^2$, note that the closed set $\Gamma_{\alpha,\beta}$ is also a finite union of torsion cosets in $\Gamma_{\alpha,\beta}=\G_m^{\gamma_\alpha+\gamma_\beta}$, since the points $\varepsilon_\alpha,\varepsilon_\beta$ appearing in the identification $Y_\alpha=\G_m^{\gamma_\alpha},Y_\beta=\G_m^{\gamma_\beta}$ are torsion.

Observe that we still have $\overline{\phi(Z)}^\Zar=X$, $\pi_1(\Gamma)=Z$, and $\overline{\phi\times\phi(\Gamma)}^\Zar=\Gamma_f$ in $ X\times X$.

For each $\alpha\in I$, define $I_\alpha:=\{\beta\in I:\pi_1(\Gamma_{\alpha,\beta})=Y_\alpha)\}$, which is non-empty. Indeed, for all $\beta\in I$, the image $\pi_1(\Gamma_{\alpha,\beta})$ is closed in $Y_\alpha$ by \cite[Proposition 3.2.18]{HtBook}, as $\Gamma_{\alpha,\beta}$ is a finite union of torsion cosets. The equality $\pi_1(\Gamma)=Z=\bigsqcup_{\alpha\in I}Y_\alpha$ implies that $\cup_{\beta\in I}\pi_1(\Gamma_{\alpha,\beta})=Z\cup Y_\alpha=Y_\alpha$, so $I_\alpha\neq\emptyset$.

Set $J:=\{\alpha\in I:\overline{\phi(Y_\alpha)}^\Zar=X\}$, which is non-empty because $\overline{\phi(Z)}^\Zar=X$ and the index set $I$ is finite. It is clear that $\gamma_\alpha=\dim(Y_\alpha)\geq\dim(X)=d$ for every $\alpha\in J$.

Note that for every $\alpha\in J$, we have $I_\alpha\subseteq J$. Indeed, given an arbitrary $\beta\in I_\alpha$, the equalities $\overline{\phi(Y_\alpha)}^\Zar=X$ and $\pi_1(\Gamma_{\alpha,\beta})=Y_\alpha$ imply that $\overline{\phi\times\phi(\Gamma_{\alpha,\beta})}^\Zar=\Gamma_f$ as the argument in the last lines of Step 2 applies. Then we see that $\beta\in J$, since the endomorphism $X$ is dominant, i.e., $\overline{\pi_2(\Gamma_f)}^\Zar=X$. Therefore, we can pick and fix a map $\sigma:J\to J$ such that $\sigma(\alpha)\in I_\alpha$ for every $\alpha\in J$. 

Set $Y:=\bigsqcup_{\alpha\in J}Y_\alpha$ and $\psi:=\bigsqcup_{\alpha\in J}\Gamma_{\alpha,\sigma(\alpha)}:=\bigsqcup_{\alpha\in J}\psi_\alpha\subseteq Y\times Y$. We also view $\psi$ as a correspondence from $Y$ to $Y$, and $\psi_\alpha$ as a correspondence from $Y_\alpha$ to $Y_{\sigma(\alpha)}$ for $\alpha\in J$. We use $\phi$ to denote the restriction of $\phi$ on $Y$, as well. The following properties hold:
\begin{enumerate}
	\item \label{cycprop1} $\forall \alpha\in J$, $Y_\alpha=\G_m^{\gamma_\alpha}$ for some integer $\gamma_\alpha\geq d$, and $\overline{\phi(Y_\alpha)}^\Zar=X$;
	\item \label{cycprop2} $\forall \alpha\in J$, $\psi_\alpha$ is a finite union of torsion cosets in $Y_\alpha\times Y_{\sigma(\alpha)}=\G_m^{\gamma_\alpha+\gamma_{\sigma(\alpha)}}$ such that $\pi_1(\psi_\alpha)=Y_\alpha$;
	\item \label{cycprop3} $\forall \alpha\in J$, $\overline{\phi\times\phi(\psi_\alpha)}^\Zar=\Gamma_f$.
\end{enumerate}

Since the set $J$ is finite, we can pick and fix a $\sigma$-periodic point $\alpha_0\in J$. Assume that $n\in\Z_{>0}$ is the exact period of $\alpha_0$, so $\sigma^{\circ n}(\alpha_0)=\alpha_0$. Replacing $(f, Y)$ by $(f^{\circ n},Y_{\alpha_0})$ and $\psi$ by the composition $\psi_{\sigma^{\circ (n-1)}(\alpha_0)}\circ\cdots\circ\psi_{\alpha_0}$ of correspondences, we may assume that $Y=Y_{\alpha_0}$ is irreducible. (Here, $\psi_{\sigma^{\circ (n-1)}(\alpha_0)}\circ\cdots\circ\psi_{\alpha_0}$ as a reduced closed subscheme of $Y_{\alpha_0}\times Y_{\alpha_0}$ is defined by: a point $(\xi,\zeta)\in Y_{\alpha_0}\times Y_{\alpha_0}$ is in $\psi_{\sigma^{\circ (n-1)}(\alpha_0)}\circ\cdots\circ\psi_{\alpha_0}$ if and only if there exist points $\xi_i\in Y_{\sigma^{\circ i}(\alpha_0)}$ for $0\leq i\leq n$ with $\xi_0=\xi$ and $\xi_n=\zeta$ such that $(\xi_i,\xi_{i+1})\in\psi_{\sigma^{\circ i}(\alpha_0)}$ for $0\leq i\leq n-1$.) Note that the above properties \eqref{cycprop1}, \eqref{cycprop2}, and \eqref{cycprop3} still hold. (It is easy to see that the torsion points in the new $\psi$ are Zariski-dense, so \eqref{cycprop2} follows from the torsion points theorem.) From now on, the index set $J=\{\alpha_0\}$ is always assumed to be a singleton.

Replacing $\psi$ by a suitable irreducible component $\psi_0$ of $\psi\subseteq Y\times Y$, we may assume that $\psi\subseteq Y\times Y$ is irreducible and the properties \eqref{cycprop1}, \eqref{cycprop2}, and \eqref{cycprop3} still hold.

We set $\psi(Y)=\psi^{\circ 1}(Y)=\pi_2(\psi)\subseteq Y$ and define
$$\psi^{\circ k}(Y)=\pi_2(\pi_1^{-1}(\psi^{\circ (k-1)}(Y))\cap\psi)\subseteq Y$$
for $k\geq2$ inductively. Note that $(\psi^{\circ k}(Y))_{k\geq1}$ is a decreasing sequence of (irreducible) torsion cosets in $Y$ (the image of a torsion coset under an algebraic group morphism from $\G_m^{k_1}$ to $\G_m^{k_2}$ is still a torsion coset; see \cite[Proposition 3.2.18]{HtBook}). By noetherianity, we can take an integer $r\geq1$ such that $\psi^{\circ k}(Y)=\psi^{\circ r}(Y)$ for all integers $k\geq r$. Replacing $(f,Y,\psi)$ by $(f^{\circ r},\psi^{\circ r}(Y),\psi^{\circ r})$ and identifying $\psi^{\circ r}(Y)=\G_m^{\dim(\psi^{\circ r}(Y))}$, we may assume that $\psi(Y)=Y$. Note that the properties \eqref{cycprop1}, \eqref{cycprop2}, \eqref{cycprop3}, and \eqref{cycprop4} hold, where \eqref{cycprop4} is the following property:
\begin{enumerate}
	\setcounter{enumi}{3}
	\item \label{cycprop4} both $Y$ and $\psi$ are irreducible, and $\psi(Y)=Y$.
\end{enumerate}
\medskip

{\bfseries Step 4: Group Endomorphism.}

In this step, under further reductions, we induce a true group endomorphism $g:Y\to Y$ from $\psi$.

Replacing $X$ by its normalization in $\overline{K}(Y)$, we may assume that for a general $y\in Y(\overline{K})$, the fiber $F_y:=\phi^{-1}(\phi(y))\subseteq Y$ of $\phi(y)$ under $\phi$ is irreducible.

For each $y\in Y(\overline{K})$, set $T_y:=\Stab_Y(F_y)=\{t\in Y:t\cdot F_y=F_y\}$, which is an algebraic subgroup of $Y$. Here we use multiplicative notation for the group operation on $Y=\G_m^{\gamma_{\alpha_0}}$.

Set $T:=\cap_{y\in Y}T_y$. Then $T=T_{y_0}$ for any general $y_0\in Y(\overline{K})$.

Note that the morphism $\phi:Y\to X$ factors through the quotient $Y\to Y/T$ by the definition of $T$. Replacing $Y$ by $Y/T$ and $\psi$ by its image in $Y/T\times Y/T$, we may assume that $T=1$ is the trivial subgroup. The properties \eqref{cycprop1}, \eqref{cycprop2}, \eqref{cycprop3}, and \eqref{cycprop4} in Step 3 still hold:
\begin{enumerate}
	\item $Y=\G_m^{k}$ for some integer $k\geq d$, and $\overline{\phi(Y)}^\Zar=X$;
	\item $\psi$ is a finite union of torsion cosets in $Y\times Y=\G_m^{2k}$ such that $\pi_1(\psi)=Y$;
	\item $\overline{\phi\times\phi(\psi)}^\Zar=\Gamma_f$;
	\item $\psi$ is irreducible and $\psi(Y)=Y$.
\end{enumerate}

Pick any $y_1\in\psi(1)=\pi_2(\psi\cap\pi_1^{-1}(1))$ and set $V:=y_1^{-1}\cdot\psi(1)$. Since $\psi$ is a torsion coset in $Y\times Y$, $\psi(1)$ is a translation of an algebraic subgroup of $Y$. As $\psi(1)$ contains the point $y_1$, we see that $V$ is an algebraic subgroup of $Y$ (which may not be irreducible). Note that the algebraic group $V$ is independent of the choices of $1\in Y(\overline{K})$ and $y_1\in\psi(1)$; that is, for every $y\in Y(\overline{K})$ and $z\in\psi(y)$, we have $\psi(y)=z\cdot V$.

Pick a general $z\in Y(\overline{K})$. For every $y\in\psi^{-1}(F_z)$, we have
$$\phi(\psi(F_y))=\{f(\phi(y))\}=\{\phi(z)\},$$
so $\psi(F_y)\subseteq F_z$. Thus $\psi(\psi^{-1}(F_z))\subseteq F_z$. On the other hand, we have $F_z\subseteq\psi(\psi^{-1}(F_z))$ since $\pi_2(\psi)=Y$ by the property (4). We conclude that
$$F_z=\psi(\psi^{-1}(F_z))=\bigcup_{w\in\psi^{-1}(F_z)}\psi(w)=\bigcup_{w\in\psi^{-1}(F_z)}\tau(w)\cdot V,$$
which is $V$-invariant, where $\tau(w)$ is any chosen point in $\psi(w)$ for $w\in\psi^{-1}(F_z)$. Then $V\leqslant\Stab_{Y}(F_z)=T=1$, and hence $V=1$.

From the fact that $V=1$, we conclude that for every $y\in Y(\overline{K})$ and $z\in\psi(y)$, we have $\psi(y)=z\cdot V=\{z\}$, so $\psi\subseteq Y\times Y$ is the graph of some morphism $g:Y\to Y$ between schemes, since $\pi_1(\psi)=Y$.

Finally, we show that $g:Y\to Y$ can be made a group endomorphism. Since $\psi$ is a torsion coset, we can write $g(y)=\tau_0\cdot g_0(y)$ for $y\in Y$, where $g_0:Y\to Y$ is an algebraic group endomorphism and $\tau_0\in Y(\overline{K})_\tors$ is a torsion point. As $\psi$ is surjective, both $g$ and $g_0$ are surjective. Assume that $n\in\Z_{>0}$ is the order of $\tau_0$ in $Y(\overline{K})$. Then it is easy to see that the (forward) orbit $O_g(1)=\{1,g(1),g^{\circ 2}(1),\dots\}$ is contained in the finite set
$$\{\zeta\in Y(\overline{K})_\tors:\zeta^n=1\}\cong \{\xi\in\overline{K}:\xi^n=1\}^{\dim(Y)}.$$
Thus $1\in Y$ is $g$-preperiodic. After replacing $(f,g)$ by $(f^{\circ r},g^{\circ r})$ for a suitable $r\in\Z_{>0}$, we may assume that $g^{\circ 2}(1)=g(1)$. Then $y_0:=g(1)$ becomes a fixed torsion point for $g$. We can change the group structure on $Y$ such that the point $y_0$ becomes the identity element of $Y$, via the isomorphism $Y\xrightarrow{\cong}Y,y\mapsto y_0\cdot y$ of $\overline{K}$-varieties. Then we have $g(1)=1$ and $g$ is a surjective algebraic group endomorphism on $Y$. The morphism $\phi:Y\to X$ is dominant by property (1), and we have $f\circ\phi=\phi\circ g$. This completes the proof of the theorem.
\end{proof}
\begin{proof}[Proof of Theorem \ref{chdimequal}]
	It suffices to prove the ``only if'' direction.
	
	Assume that $(X,f)$ is of monomial type. After a base change, we may assume that $k=\overline{k}$ and work over $\overline{k}$. Then there exist integers $l\geq1,n\geq d$, a group endomorphism $g:\G_{m}^n\to\G_{m}^n$, and a dominant morphism $\phi:\G_{m}^n\to X$ such that $f^{\circ l}\circ\phi=\phi\circ g$. Without loss of generality, we may assume that $l=1$ (note that $f^{\circ l}$ remains cohomologically hyperbolic).
	
	By Lemma \ref{Gmndecomp}, we may assume that $g$ decomposes as $g=(\varphi_{A_1},\varphi_{A_2})$, where $n_1,n_2\geq0$ with $n_1+n_2=n$, $A_1\in M_{n_1}(\Z)$ has all eigenvalues in $U(k)$, and $A_2\in M_{n_2}(\Z)$ is positive. 
	
	Let $1\leq i\leq d$ be the unique integer such that $\mu_i(f)>1>\mu_{i+1}(f)$. Fix a projective compactification $X^\prime$ of $X$ and a very ample line bundle $L$ on $X^\prime$. View $f$ as a dominant rational self-map $f:X^\prime\dashrightarrow X^\prime$ of $X^\prime$. Embed $\G_m^q\subset\P^q$ for $q\geq0$. Fix $1<\beta<\mu_i(f)$ and let $U\subseteq X^\prime$ be the non-empty Zariski open subset given by Lemma \ref{chdeggrow}. Set $H:=X^\prime\setminus U$, which is a proper Zariski closed subset of $X^\prime$.
	
	Since $\phi$ is dominant, by Lemma \ref{posperdense}, for a general $\varphi_{A_2}$-periodic point $z\in\G_m^{n_2}(k)$, the image $\phi(\G_m^{n_1}\times\{z\})$ is not contained in $H$. For such a $z$, let $Y_z$ be the Zariski-closure of $\phi(\G_m^{n_1}\times\{z\})$ in $X^\prime$, which is irreducible.
	
	\textbf{Claim:} For a general $\varphi_{A_2}$-periodic point $z\in\G_m^{n_2}(k)$, $Y_z$ is a closed point in $X$.
	
	\textit{Proof of the Claim:} Suppose, for contradiction, that for a general $\varphi_{A_2}$-periodic point $z\in\G_m^{n_2}(k)$, we have $\dim(Y_z)\geq1$ and $Y_z\nsubseteq H$. Then we can find an irreducible curve $C_0$ in $\G_m^{n_1}\times\{z\}$ such that $C:=\overline{\phi(C_0)}^\Zar$ is an irreducible curve in $X^\prime$ with $C\cap U_f\neq\emptyset$ and $\dim(f^{\circ r}(C))=1$ for every $r\geq1$. After a suitable iterate, we may assume that $z$ is a $\varphi_{A_2}$-fixed point. Let $C_1$ be the Zariski-closure of $C_0$ in $\P^{n_1}\supseteq\G_m^{n_1}\times\{z\}$. For every $r\geq1$, pick a birational morphism $\tau_r:Y_r\to 
	\P^{n_1}$ from a projective variety $Y_r$ such that both $\phi_r:=\phi\mid_{\G_m^{n_1}\times\{z\}}\circ \tau_r:Y_r\to X^\prime$ and $g_r:=\phi\mid_{\G_m^{n_1}\times\{z\}}\circ\varphi_{A_1}^{\circ r}\circ \tau_r:Y_r\to X^\prime$ are morphisms, with $C_1\nsubseteq I(\tau_r^{-1})$. Define $(L^\prime_r\cdot C_0):=(g_r^*(L)\cdot C_2)$, where $C_2=\overline{\tau_r^{-1}(C_1\setminus I(\tau_r^{-1}))}^\Zar$. (Then the intersection number $(L^\prime_r\cdot C_0)$ is the desired one for ``$((\varphi_{A_1}^{\circ r})^*((\phi\mid_{\G_m^{n_1}\times\{z\}})^*L)\cdot C_0) $'', which can be computed using any sufficiently high projective models of $\G_m^{n_1}\times\{z\}$; see \cite{recursive} for details.) Let $\beta_1:=(\beta+2)/3$ and $\beta_2:=(2\beta+1)/3$, then $1<\beta_1<\beta_2<\beta$. By Lemma \ref{Gmndyndeg}, all dynamical degrees of $\varphi_{A_1}$ on $\G_{m}^{n_1}\cong\G_m^{n_1}\times\{z\}$ are $1$ since the eigenvalues of $A_1$ are roots of unity. Therefore, there exists a constant $A_1>0$ such that $(L^\prime_r\cdot C_0)\leq A_1\beta_1^r$ for all $r\geq1$. However, Lemma \ref{chdeggrow} implies that there exists a constant $A_2>0$ such that $(L_r\cdot C)\geq A_2\beta_2^r$ for all $r\geq1$. Then $(L_r\cdot C)>(L^\prime_r\cdot C_0)$ for all sufficiently large integers $r\gg1$, contradicting the projection formula. Therefore, the claim holds.
	
	Since the condition that $\overline{\phi(\G_m^{n_1}\times\{z\})}^\Zar$ is a closed point is a closed condition in $z\in \G_m^{n_2}$, and by Lemma \ref{posperdense} the set of $\varphi_{A_2}$-periodic points is Zariski-dense, we conclude that $\phi(\G_m^{n_1}\times\{z\})$ is a closed point in $X$, for every $z\in\G_m^{n_2}$. Thus, there exists a morphism $\phi^\prime:\G_m^{n_2}\to X$ such that $\phi=\phi^\prime\circ\pi_2$, where $\pi_2:\G_m^n=\G_m^{n_1}\times\G_m^{n_2}\to\G_m^{n_2}$ is the projection. Note that $\phi^\prime$ is dominant as $\phi$ is.
	
	Replacing $(\G_m^n,g,\phi)$ by $(\G_m^{n_2},\varphi_{A_2},\phi^\prime)$, we may assume that $n_1=0$ and $g=\varphi_{A_2}$ is positive.
	
	For $y\in \G_m^n(k)$, set $F_y:=\phi^{-1}(\phi(y))$. By the argument in Step 4 of the proof of Theorem \ref{thmmain}, we may assume that $F_y$ is irreducible for a general $y\in\G_m^n(k)$, and that $T=1$ is the trivial subgroup, where $T=\Stab_{\G_m^n}(F_y)$ for a general $y\in \G_m^n(k)$.
	
	We now show that $n=\dim(X)$. Let $Z$ be the irreducible component of $\G_m^n\times_X\G_m^n=\{(u,v)\in\G_m^n\times\G_m^n:\phi(u)=\phi(v)\}\subseteq\G_m^n\times\G_m^n$ containing the diagonal $\Delta$ in $\G_m^n\times\G_m^n$. Then $Z$ is $(g,g)$-invariant, where $(g,g):\G_m^n\times\G_m^n\to \G_m^n\times\G_m^n$ is the positive group endomorphism given by $(g,g)(u,v)=(g(u),g(v))$. By \cite[Proposition 6.1]{PR04}, $(g,g)$-periodic torsion points are Zariski-dense in $Z$. By the torsion points theorem (Theorem \ref{thmtorsion}), the irreducible closed subset $Z$ is a linear subtorus (containing $\Delta$) in $\G_m^n\times\G_m^n$ because the identity element is in $Z$. For a general $y\in \G_m^n(k)$,
	$$F_y=\phi^{-1}(\phi(y))=\{u\in\G_m^n:(1,u)\in (y^{-1},1)\cdot Z\}$$
	is a coset of a subgroup of $\G_m^n$. Therefore, $T=\Stab_{\G_m^n}(F_y)$ is a translation of $F_y$. Hence the general fiber $F_y$ of $\phi$ has dimension $\dim(F_y)=\dim(T)=0$. We conclude that $\phi$ is generically finite and $n=\dim(X)$.
\end{proof}
\begin{proof}[Proof of Corollary \ref{chequiv}]
	Fix an embedding $X\stackrel{\iota}{\hookrightarrow}\A^N$ over $\overline{\Q}$ ($N\in\Z_{>0}$).
	
	The equivalence \eqref{chequiv1}$\Leftrightarrow$\eqref{chequiv2} is just Theorem \ref{chdimequal}.
	
	The direction \eqref{chequiv3}$\Rightarrow$\eqref{chequiv1} follows from Theorem \ref{thmmain}.
	
	It suffices to prove the direction \eqref{chequiv2}$\Rightarrow$\eqref{chequiv3}. Assume that $(X,f)$ is of strongly monomial type. Over $\overline{\Q}$, there exist integers $l\geq1$, a group endomorphism $g:\G_{m}^d\to\G_{m}^d$, and a dominant morphism $\phi:\G_{m}^d\to X$ such that $f^{\circ l}\circ\phi=\phi\circ g$, where $d:=\dim(X)$. Take a number field $K$ such that $X,f,\iota,g,\phi$ are all defined over $K$. Since $(X,f)$ is cohomologically hyperbolic, so is $(X,f^{\circ l})$; hence the system $(\G_{m}^d,g)$ is also cohomologically hyperbolic because $\phi$ is generically finite. By Proposition \ref{Gmndyndeg}, $g$ is positive. By \cite[Proposition 21(d)]{Sil14}, the set of $g$-preperiodic points in $\G_m^d(\C)$ is $\PrePer(g)=\G_m^d(\overline{\Q})_\tors$. In particular,
	$$Q:=\PrePer(g)\cap\G_m^d(\overline{\Q})_\tors=\G_m^d(\overline{\Q})_\tors$$
	is Zariski-dense in $\G_n^d$. Observe that $Q\subseteq (U(\overline{\Q}))^d\subseteq\G_m^d(K^c)$. Then $Q\subseteq\G_m^d(K^c)$ satisfies the conditions (DCI) and (BH). Write $g=\varphi_A$, where $A\in M_d(\Z)$ is positive. Let $A^\adj$ be the adjoint matrix of $A$ and $\delta:=\det(A)\in\Z\setminus\{0\}$. From $\varphi_{A^\adj}\circ\varphi_{A}=\varphi_{\delta I_d}$, it is easy to see that $g^{-1}(Q)=Q=g(Q)$. We conclude that the subset $Q\subseteq\G_m^d(K^c)$ satisfies (DCI), (BH), and (AI) for the system $(\G_m^d,g)$. Let $P:=g(Q)\subseteq X(K^c)$, which is Zariski-dense in $X$ because $g$ is dominant. It is easy to see that $P$ satisfies (DCI) and (BH), since $g$ and $\iota$ are defined over $K$. By $Q=g(Q)$ and $f^{\circ l}\circ\phi=\phi\circ g$, we have $P\setminus (f^{\circ l})^{-1}(P)=\emptyset$; hence $P$ satisfies the condition (AI) for the system $(X,f^{\circ l})$. The proof is completed.
\end{proof}

\section{Proofs of the applications}\label{SApp}
\begin{proof}[Proof of Theorem \ref{appaff}]
	Using the very ample line bundle $\sO(1)$ on $\P^N$ and $f^*\sO(1)=\sO(1)^{\otimes d}$, it is easy to obtain $\la_i(f)=d^i$ for $0\leq i\leq N$. In particular, $f$ is cohomologically hyperbolic.
	 
	By Theorems \ref{thmmain} and \ref{chdimequal}, it suffices to show that the subset $P$ satisfies (DCI), (BH), and (AI).
	 
	It is clear that $P\subseteq f^{-1}(P)$, hence (AI) holds for $P$.
	 
	We consider (BH) for $P$. Fix a norm $\Vert\cdot\Vert$ on $\C^N$. Let
	\begin{equation}\label{Green}
	 G:\C^N\to\R_{\geq0},G(z)=\lim_{n\to\infty}\frac{1}{d^n}\log\max\{1,\Vert f^{\circ n}(z)\Vert\}
	\end{equation}
	be the Green function associated with $f$, which is a continuous plurisubharmonic function such that 
	\begin{equation}\label{Greenprop}
	 G(z)=\log\Vert z\Vert+O(1)\text{ as }\Vert z\Vert\to+\infty\text{, and }G(f(z))=d\cdot G(z).
	\end{equation}
	See \cite{BJ00} for more details on $G$. For every $f$-preperiodic point $x=(x_1,\cdots,x_N)$ in $\A^N(\C)$, the sequence
	$$\left(\log\max\{1,\Vert f^{\circ n}(x)\Vert\}\right)_{n\geq1}$$
	is bounded, so $x$ is contained in the compact set
	$$K(f):=\{z\in\C^N:G(z)=0\}\subset\C^N.$$
	We conclude that $\max\{\left|x_i\right|:1\leq i\leq N\}\leq R(f)$, for some constant $R(f)>0$ depending only on $f$. Set $R:=\max\{R(\sigma(f)):\sigma\in \Gal(\overline{\Q}/\Q))\}>0$. Then we see that $C(x)\leq R$ for every $x\in P$. Thus (BH) holds for $P$.
	 
	\medskip
	 
	We check the condition (DCI) for $P$. The set $P$ is Zariski-dense in $\A^N$ by assumption. For $1\leq i\leq N$, let $f_i^+$ be the sum of the monomials of degree $d$ in $f_i$. Since $f$ is a regular endomorphism, each $f_i^+$ is a non-zero homogeneous polynomial of degree $d$ in $z_1,\dots,z_N$. Set $h_i=f_i-f_i^+$, which has degree at most $d-1$. Let $f_h=(f_1^+,\dots,f_N^+):\A^N\to\A^N$ be the homogeneous part of $f$. Since $f$ is a regular endomorphism, we have $f_h ^{-1}(0)=\{0\}$ in $\A^N(\C)$. By Hilbert's Nullstellensatz, there exists an integer $m\geq d$ such that
	$$(z_1^m,\dots,z_N^m)\subseteq (f_1^+,\dots,f_N^+)$$
	as ideals of the polynomial ring $\overline{K}[z_1,\dots,z_N]$. Then we can take $(R_{ij})_{1\leq i,j\leq N}\subset \overline{K}[z_1,\dots,z_N]$ such that $z_i^m=\sum_{j=1}^N R_{ij}f_j^+$ for $1\leq i\leq N$, where each $R_{ij}$ is a homogeneous polynomial of degree $m-d$ in $z_1,\dots,z_N$ over $\overline{K}$. Since both the $z_i^m$'s and the $f_i^+$'s have coefficients in $K$, we may assume that each $R_{ij}$ has $K$-coefficients. For every finite place $v\in\sM_K$, fix an arbitrary extension of the absolute value $\left|\cdot \right|_v$ on $K$ (normalized in some way) to $\overline{\Q}$. Let $v\in\sM_K$ be a finite place. For a polynomial
	$$R=\sum_{I\in\Z_{\geq0}^N}a_I z^I\in\overline{K}[z_1,\dots,z_N],$$
	define $\left|R\right|_v=\max\{\left|a_I\right|_v:I\in\Z_{\geq0}^N\}$. For a point $z=(z_1,\dots,z_N)\in\A^N(\overline{K})$, define $\left|z\right|_v=\max\{\left|z_i\right|_v:1\leq i\leq N\} $. Set $B_v=\max\{\left|A_{ij}\right|_v:1\leq i,j\leq N\}>0$, $H_v=\max\{\left|h_{i}\right|_v:1\leq i\leq N\}$, and
	\begin{equation}\label{affAv}
	 A_v:=\max\{1,B_v H_v,B_v^{1/(d-1)}\}\geq1.
	\end{equation}
	We claim that
	\begin{equation}\label{affDCIv}
	 \text{for every } z\in\A^N(\overline{K})\text{ with }\left|z\right|_v>A_v\text{, we have }\left|f(z)\right|_v>\left| z\right| _v .
	\end{equation}
	Let $z\in\A^N(\overline{K})$ such that $\left|z\right|_v=\left|z_i\right|_v> A_v$ where $1\leq i\leq N$. Then
	\begin{align*}
	 \left|z\right|_v^m&=\left|z_i^m\right|_v=\left|\sum_{j=1}^N R_{ij}(z)f_j^+(z)\right|_v\\
	 &\leq \max_{1\leq j\leq N} \left|R_{ij}(z)\right|_v\cdot\left|f_i^+(z)\right|_v\leq\left( \max_{1\leq j\leq N} \left|R_{ij}(z)\right|_v\right) \left|f_h(z)\right|_v\\
	 &\leq B_v\left|z \right|_v^{m-d} \left|f_h(z)\right|_v,
	\end{align*}
	hence 
	\begin{equation*}
	 \left|f_h(z)\right|_v\geq B_v^{-1}\left|z\right|_v^d.
	\end{equation*}
	Take $1\leq i^\prime\leq N$ such that $\left|f_h(z)\right|_v=\left|f_{i^\prime}^+(z)\right|_v$. We have
	$$\left|f_{i^\prime}^+(z)\right|_v=\left|f_h(z)\right|_v\geq B_v^{-1}\left|z\right|_v^d> H_v\left|z\right|_v^{d-1}\geq \left|h_{i^\prime}(z)\right|_v,$$
	hence
	\begin{align*}
	 \left|f(z)\right|_v&\geq\left|f_{i^\prime}(z)\right|_v=\left|f_{i^\prime}^+(z)-h_{i^\prime}(z)\right|_v=\left|f_{i^\prime}^+(z)\right|_v\\
	 &\geq B_v^{-1}\left|z\right|_v^d>\left|z\right|_v,
	\end{align*}
	i.e., \eqref{affDCIv} holds. Inductively using \eqref{affDCIv}, we conclude that for every $z\in\A^N(\overline{K})$ with $\left|z\right|_v> A_v$, the sequence $(\left|f^{\circ n}(z)\right|_v)_{n\geq1}$ is strictly increasing, so $z$ cannot be $f$-preperiodic.
	 
	Since there are only finitely many non-archimedean $v\in\sM_K$ such that $A_v>1$, we can take an integer $M\geq1$ such that $\left|M\right|_v\leq A_v^{-1}$ for all finite $v\in\sM_K$. Let $z\in P$. For every finite $v\in\sM_K$, $\left|Mz\right|_v\leq A_v^{-1}A_v=1$ by \eqref{affDCIv}, since $z$ is $f$-preperiodic. Hence the coordinates $Mz_1,\dots,Mz_N$ are all algebraic integers. We conclude that (DCI) holds for $P$.
\end{proof}
\begin{proof}[Proof of Theorem \ref{back}]
	It suffices to prove the ``if'' direction. Assume that $P$ is Zariski-dense in $\A^N$. We show that $P$ satisfies (AI), (BH), and (DCI). After enlarging $K$ if necessary, we may assume $x\in \A^N(K)$.
	 
	It is clear that $P\setminus f^{-1}(P)\subseteq\{x\}$ is not Zariski-dense in $\A^N$, i.e., (AI) holds.
	
	We consider (BH) for $P$. Fix a norm $\Vert\cdot\Vert$ on $\C^N$. Similar to the proof of Theorem \ref{appaff}, let $G:\C^N\to\R_{\geq0}$ be the Green function associated with $f$, as given in \eqref{Green}. By \eqref{Greenprop}, every point $z\in P$ is contained in the compact set
	$$\{w\in\C^N:G(w)\leq G(x)\}\subset\C^N.$$
	Thus, there exists a constant $R(f,x)>0$ depending only on $f$ and $x$ such that $\max\{\left|z_i\right|:1\leq i\leq N\}\leq R(f,x)$ for all $z=(z_1,\dots,z_N)\in P$. Set $$R_x:=\max\{R(\sigma(f),\sigma(x)):\sigma\in \Gal(\overline{\Q}/\Q))\}>0.$$
	We see that $C(z)\leq R_x$ for all $z\in P$. Thus (BH) holds for $P$.
	
	We check (DCI) for $P$. The set $P$ is Zariski-dense in $\A^N$ by assumption. For every finite place $v\in\sM_K$, let $A_v\geq1$ be given as in \eqref{affAv} in the proof of Theorem \ref{appaff}, and set
	$$A_v(x):=\max\{A_v,\left|x\right|_v\}.$$
	Inductively using \eqref{affDCIv}, for every $z\in\A^N(\overline{K})$ with $\left|z\right|_v>A_v(x)$, we have
	$$\left|f^{\circ n}(z)\right|_v>\left|z\right|_v>A_v(x)\geq\left|x\right|_v$$
	for all $n\geq1$ (in particular, $x\notin O_f(z)$, the $f$-forward orbit of $z$). We conclude that for every $z\in P$ and every finite $v\in\sM_K$, $\left|z\right|_v\leq A_v(x)$. Since there are only finitely many non-archimedean $v\in\sM_K$ such that $A_v(x)>1$, we can take an integer $M_x\geq1$ such that $\left|M_x\right|_v\leq A_v(x)^{-1}$ for all finite $v\in\sM_K$. For every $z\in P$ and every finite $v\in\sM_K$, $\left|M_x z\right|_v\leq A_v(x)^{-1}A_v(x)=1$, so the coordinates $M_x z_1,\dots,M_x z_N$ are all algebraic integers. Thus (DCI) holds for $P$.
	
	From the proof of Theorem \ref{appaff}, $f$ is cohomologically hyperbolic (in fact, polarized), so the last statement follows directly from Theorems \ref{thmmain} and \ref{chdimequal}.
\end{proof}
\begin{proof}[Proof of Theorem \ref{Henon}]
	Let 
	$$d=\deg_1(f)\geq2,\quad d_{-}=\deg_1(f^{-1}),\quad p=\dim(I(f))+1,\quad q=\dim(I(f^{-1}))+1.$$
	By \cite[Proposition 2.3.2]{SibonyFrench}, we have $p+q=N$ and $d^q=d_{-}^p$. In particular, $d_{-}\geq2$. Dinh and Sibony \cite{DS} computed the dynamical degrees of $f$ as follows (see also \cite[\S 1.1]{Thelin}):
	$$\la_i(f)=d^i\text{ for }0\leq i\leq q\quad\text{and}\quad\la_j(f)=d_{-}^{N-j}\text{ for }q\leq j\leq N.$$
	Then $f$ is $q$-cohomologically hyperbolic.
	
	Assume that $P=\Per(f,\A^N(K^c))$ is Zariski-dense in $\A^N$. We will deduce a contradiction.
	
	We show that the subset $P$ satisfies (DCI), (BH), and (AI). 
	
	By \cite[Lemma 6.1]{Kawaguchi}, the inverse $f^{-1}$ is also defined over $K\subseteq K^c$, so $P=f^{-1}(P)$ and (AI) holds for $P$.
	
	For every place $v\in\sM_K$, fix an arbitrary extension of the absolute value $\left|\cdot \right|_v$ on $K$ (normalized in some way) to $\overline{K_v}$, where $\overline{K_v}$ is the algebraic closure of the completion $K_v$ at $v$. Fix an arbitrary $v\in\sM_K$. For $z=(z_1,\cdots,z_N)\in\A^N(\overline{K_v})$, set $\left|z\right|_v:=\max\{\left|z_i\right|_v:1\leq i\leq N\}$. Define two non-negative functions $G_{f,v},G_{f^{-1},v}:\A^N(\overline{K_v})\to\R$ by
	\begin{align*}
		G_{f,v}(z)=&\lim\limits_{n\to\infty}\frac{1}{d^n}\log\max\{\left|f^{\circ n}(z) \right|_v ,1\},\\
		G_{f^{-1},v}(z)=&\lim\limits_{n\to\infty}\frac{1}{d_{-}^n}\log\max\{\left|(f^{-1})^{\circ n}(z)\right|_v ,1\}.
	\end{align*}
	These limits in the above definition exist \cite{SibonyFrench,Kawaguchi}. By Kawaguchi \cite[Theorems A(3) and Theorem 5.1]{Kawaguchi}, there exist subsets $V^+_v,V^-_v\subseteq\A^N(\overline{K_v})$ with 
	\begin{equation}\label{unionall}
		V^+_v\cup V^-_v=\A^N(\overline{K_v})
	\end{equation}
	and constants $c^+_v,c^-_v\in\R$ such that
	\begin{align}
		\label{V+ineq}G_{f,v}(\cdot)&\geq \log\max\{\left|\cdot\right|_v,1\}+c^+_v\text{ on }V^+_v;\\
		\label{V-ineq}G_{f^{-1},v}(\cdot)&\geq \log\max\{\left|\cdot\right|_v,1\}+c^-_v\text{ on }V^-_v.
	\end{align}
	Moreover, Kawaguchi showed that \cite[Theorems B(1)]{Kawaguchi} one can require 
	\begin{equation}
		\label{almost1}c^+_v=c^-_v=0
	\end{equation}
	for all but finitely many $v\in\sM_K$.
	Let $x\in P$ be a $K^c$-rational $f$-periodic point. Since $d,d_-\geq2$, by definition we obtain \begin{equation}\label{perG0}
		G_{f,v}(x)=G_{f^{-1},v}(x)=0.
	\end{equation}
	Set 
	\begin{equation}
		\label{Av}A_v=\max\{1,\exp(-c^+_v),\exp(-c^-_v)\}\geq1.
	\end{equation}
	Then $\left|x\right|_v\leq A_v$ by \eqref{unionall}-\eqref{V-ineq} and \eqref{perG0}.
	
	We conclude that for every $x\in P$, $C(x)\leq c$, where $c\in\R_{\geq1}$ is the maximum of all $A_v$ for archimedean $v\in\sM_K$. Hence $P$ satisfies (BH).
	
	By \eqref{almost1} and \eqref{Av}, the upper bound $A_v=1$ for all but finitely many $v\in\sM_K$. Thus we can take an integer $M\geq1$ such that $\left|M\right|_v\leq A_v^{-1}$ for all finite places $v\in\sM_K$. For every $x=(x_1,\dots,x_N)\in P$ and every finite place $v\in\sM_K$, $$\left|Mx_i\right|_v\leq\left|M\right|_v\left|x\right|_v\leq A_v^{-1}A_v=1,$$
	so $Mx_i$ is an algebraic integer ($1\leq i\leq N$). Thus, $P$ satisfies (DCI).
	
	By Theorems \ref{thmmain} and \ref{chdimequal}, we see that $f$ is of strongly monomial type. We make a base change and work over $k=\overline{\Q}$. Then there exist an integer $l\geq1$, a group endomorphism $g=\varphi_A:\G_m^N\to\G_m^N$, and a dominant morphism $\phi:\G_m^N\to \A^N$ such that $f^{\circ l}\circ\phi=\phi\circ g$, where $A\in M_N(\Z)$ is a matrix with $\det(A)\neq0$. Since the iterate $f^{\circ l}$ is still an automorphism of H\'enon type \cite[Theorem 7.10(a)]{241}, we may assume $l=1$. Since $\phi:\G_m^N\to \A^N$ is generically finite, $\varphi_A$ and $f$ have the same dynamical degrees. In particular, by Lemma \ref{Gmndyndeg}, we have $$\left|\det(A)\right|=\la_N(g)=\la_N(f)=1,$$
	so $\det(A)=\pm1$ and $A\in\GL_N(\Z)$. Let $\nu_1,\dots,\nu_N$ be the eigenvalues of $A$ in $\C$ (counted with multiplicity) such that $\left| \nu_1\right| \geq\cdots\geq\left| \nu_N\right|>0$. Since $\nu_1$ is an eigenvalue of the matrix $A\in\GL_N(\Z)$, it is an algebraic unit (i.e., both $\nu_1$ and $\nu_1^{-1}$ are algebraic integers). Hence its complex conjugate $\overline{\nu_1}$ is also an algebraic unit. However, by Lemma \ref{Gmndyndeg}, we deduce that
	$$d^2=\la_1(f)^2=\la_1(g)^2=\left|\nu_1\right|^2=\nu_1 \overline{\nu_1}$$
	is also an algebraic unit, which is a contradiction because $d^2$ is an integer at least $4$. We conclude that $P$ is not Zariski-dense in $\A^N$.
\end{proof}

\subsection*{Acknowledgement}
The first-named author Zhuchao Ji is supported by National Key R\&D Program of China (No.2025YFA1018300), NSFC Grant (No.1240\\1106), and ZPNSF grant (No.XHD24A0201). The second and third-named authors Junyi Xie and Geng-Rui Zhang are supported by NSFC Grant (No.12271007). The authors thank Joseph Silverman for suggesting that we consider automorphisms of H\'enon types on $\A^N$ for all $N\geq2$. The authors would like to thank Umberto Zannier, Sheng Meng, Qi Zhou, and Jason Bell for helpful comments on the first version of this paper.

\end{document}